 \newtheorem{theorem}{Theorem}[section]
 \newtheorem{lemma}{Lemma}[section]
 \newtheorem{proposition}{Proposition}[section]
 \newtheorem{remark}{Remark}[section]
 \numberwithin{equation}{section}
\def\u{\tilde{u}}
\def\v{\tilde{v}}
\def\M{\mathcal{M}}
\def\u{\tilde{u}}
\newcommand{\beq}{\begin{equation}}
\newcommand{\eeq}{\end{equation}}
 \def\non{\nonumber }
\def\bea{\begin{eqnarray}}
\def\eea{\end{eqnarray}}
\def\L{\mathcal{L}}
\begin{document}
\title{Global Stability of Keller--Segel Systems in Critical Lebesgue Spaces}
\author{Jie Jiang\thanks{Wuhan Institute of Physics and Mathematics, Chinese Academy of Sciences,
Wuhan 430071, HuBei Province, P.R. China,
\textsl{jiang@wipm.ac.cn}.}}

\date{\today}

\maketitle

\begin{abstract}This paper is concerned with the initial-boundary value problem for the  classical Keller--Segel system 
\begin{equation}
	\begin{cases}\label{chemo0}
		\rho_t-\Delta \rho=-\nabla\cdot(\rho\nabla c),\qquad &x\in\Omega,\;t>0\\
		\gamma c_t-\Delta c+c= \rho,\qquad &x\in\Omega,\;t>0\\
		\frac{\partial \rho}{\partial\nu}=\frac{\partial c}{\partial\nu}=0,\qquad &x\in\partial\Omega,\;t>0\\
		\rho(x,0)=\rho_0(x),\;\;\gamma c(x,0)=\gamma c_0(x),\qquad & x\in\Omega
	\end{cases}
	\end{equation}
 in  a bounded domain $\Omega\subset\mathbb{R}^d$ with $d\geq2$, where $\gamma=0$ or $1$. We study the existence of non-trivial global classical solutions near the spatially homogeneous equilibria $\rho= c\equiv\M>0$ with $\M$ being any given large constant which is an open problem proposed in \cite[p. 1687]{BBTW15}. More precisely, we prove that if $0<\M<1+\lambda_1$ with $\lambda_1$ being the first positive eigenvalue of the Neumann Laplacian operator, one can find $\varepsilon_0>0$ such that for all suitable regular initial data $(\rho_0,\gamma c_0)$ satisfying \begin{equation}\label{assumpt1}
	\frac{1}{|\Omega|}\int_\Omega \rho_0dx-\M=\gamma\left(\frac{1}{|\Omega|}\int_\Omega c_0dx-\M\right)=0
\end{equation}and \begin{equation}\label{assumpt2}\|\rho_0-\M\|_{L^{d/2}(\Omega)}+\gamma\|\nabla c_0\|_{L^d(\Omega)}<\varepsilon_0,\end{equation} problem \eqref{chemo0} possesses a unique global classical solution which is bounded and converges to the trivial state $(\M,\M)$ exponentially as time goes to infinity. The key step of our proof lies in deriving certain delicate $L^p-L^q$ decay  estimates for the semigroup associated with the corresponding linearized system of \eqref{chemo0} around the constant steady states. It is well-known that  classical solution to system \eqref{chemo0} may blow up in finite or infinite time when the conserved total mass $m\triangleq\int_\Omega \rho_0 dx$ exceeds some threshold number if $d=2$ or for arbitrarily small mass if $d\geq3$, while our result links the dynamics of solutions explicitly with a new geometric quantity $|\Omega|$, i.e.,  non-trivial classical solutions starting from initial data  satisfying \eqref{assumpt1}-\eqref{assumpt2} with arbitrarily large total mass $m$ exists globally provided that $|\Omega|$ is large enough such that $m<(1+\lambda_1)|\Omega|$.

{\bf Keywords}: Chemotaxis, Keller--Segel model, global solutions, global stability.\\
\end{abstract}
\section{Introduction}
In this paper, we study the initial-boundary value problem for the following classical  Keller--Segel system of chemotaxis:
\begin{equation}
	\begin{cases}\label{chemo1}
		\rho_t-\Delta \rho=-\nabla\cdot(\rho\nabla c),\qquad &x\in\Omega,\;t>0\\
		\gamma c_t-\Delta c+c= \rho,\qquad &x\in\Omega,\;t>0\\
		\frac{\partial \rho}{\partial\nu}=\frac{\partial c}{\partial\nu}=0,\qquad &x\in\partial\Omega,\;t>0\\
		\rho(x,0)=\rho_0(x),\;\;\gamma c(x,0)=\gamma c_0(x),\qquad & x\in\Omega
	\end{cases}
\end{equation}where $\Omega\subset\mathbb{R}^d$ with $d\geq2$ is a bounded domain with smooth boundary. Here, $\rho$ and $c$ denote the density of  cells and the concentration of chemical signal, respectively. $\gamma\geq0$ is a given constant; when $\gamma=0$, \eqref{chemo1} is reduced to an elliptic--parabolic system which is usually called a simplified Keller--Segel model in the existing literature.

A well-known fact of the Keller--Segel model \eqref{chemo1} is that classical solutions with large initial data may blow up when dimension $d\geq2$ (see \cite{Win10,BBTW15,Nagai98, Nagai00} and references cited therein). In particular, a critical-mass phenomenon exists in the two-dimensional case. More precisely, if the conserved total mass of cells $m\triangleq\int_\Omega \rho_0 dx$ is lower than certain number, then global classical solution exists and remains bounded for all times; otherwise, it may blow up in finite or infinite time. It was observed that if $\gamma=0$, the threshold number is $4\pi$ for any bounded domain and is $8\pi$ for a disk of any radius.

On the other hand, since $\rho=c\equiv \M$ with any positive number $\M$ is a spatially homogeneous steady solution, an open problem proposed in a recent survey \cite[p. 1687]{BBTW15} is that for any given initial data $(\rho_0,c_0)$ sufficiently close to $\M$, whether we can get a non-trivial global classical  solution which is bounded for all times. The present contribution is devoted to this problem and gives a partially affirmative answer. More precisely, we prove that if $0<\M<1+\lambda_1$ with $\lambda_1$ being the first positive eigenvalue of the Neumann Laplacian operator, one can find $\varepsilon_0>0$ such that for all suitable regular initial data $(\rho_0,\gamma c_0)$ satisfying \begin{equation}\label{restrict}
	\frac{1}{|\Omega|}\int_\Omega \rho_0dx-\M=\gamma\left(\frac{1}{|\Omega|}\int_\Omega c_0dx-\M\right)=0
\end{equation}and $\|\rho_0-\M\|_{L^{d/2}(\Omega)}+\gamma\|\nabla c_0\|_{L^d(\Omega)}<\varepsilon_0$, problem \eqref{chemo1} possesses a unique global classical solution which is bounded and converges to $(\M,\M)$ exponentially as time goes to infinity. 

Observing that the conserved total mass $m=\int_\Omega\rho_0dx=\M|\Omega|<(1+\lambda_1)|\Omega|$, our result  indicates a {\it new} observation that  classical solution can be obtained globally starting from suitable initial data of {\it arbitrarily large} total mass $m$ provided that the area $|\Omega|$ is large, correspondingly. Note that due to the existing results, the threshold number is $8\pi$ when $\Omega=\mathcal{B}$ being a disk in $\mathbb{R}^2$ for the case $\gamma=0$, no matter how large the radius is. In this respect, we rigorously prove that globally bounded nontrivial classical solution exists with any over-$8\pi$ total mass for $\Omega=\mathcal{B}$ if the radius is large enough. We also note that in \cite{Lorz}, numerical evidence of existence of global classical solution with total mass above the critical mass $8\pi$ was showed for a simplified Keller--Segel-Stokes system with zero Dirichlet boundary conditions for the chemical concentration $c$, fluid velocity $u$ and Neumann boundary condition for the cell density $\rho$, respectively. However, there was no analytical proof available  and hence it is unknown whether the presence of Stokes fluid plays an essential role in their example since fluid advection had already been conjectured to regularize singular nonlinear dynamics \cite{KX16}. In this regard and to the best of our knowledge, the present contribution provides the first example with proof for the existence of non-trivial global classical solution with large total mass for the classical Keller--Segel system.

 It is worth mentioning that in \cite{Win13} (aslo \cite{Win10}), when $d\geq3$ and in the radial setting, Winkler proved that arbitrary small perturbations of any initial data may immediately produce blow-up when the considered norm is chosen in $L^p\times W^{1,2}$ with $p\in(1,\frac{2d}{d+2})$. Due to their result, for any $\varepsilon$, one can always find $(\rho_{0\varepsilon},c_{0\varepsilon})$ satisfying $\|\rho_{0\varepsilon}-\M\|_{L^p}+\|c_{0\varepsilon}-\M\|_{W^{1,2}}\leq \varepsilon$ such that the solution starting from $(\rho_{0\varepsilon},c_{0\varepsilon})$  blows up in finite time. We remark that there is no contradiction with our results since on the one hand we have the restriction \eqref{restrict} on the initial data and on the other hand, the metric space $L^{d/2}\times W^{1,d}$ under consideration in our result is more regular and hence smaller than $L^p\times W^{1,2}$ when $d\geq3.$
 
 In addition, we would like to point out that the metric space $ L^{d/2}\times \dot W^{1,d}$ is a scaling-invariant space for the Keller--Segel system. To see this point, we observe that system \eqref{chemo1} with the second equation replaced by $\gamma c_t-\Delta c=\rho$ has the following property (taking $\Omega=\mathbb{R}^d$): if $(\rho,c)$ is a solution to \eqref{chemo1}, then the pair $(\rho_{\lambda}, c_{\lambda})$ given by
		\begin{equation}
			\rho_{\lambda}(x,t)=\lambda^2 \rho(\lambda x,\lambda^2 t),\quad c_{\lambda}(x,t)= c(\lambda x,\lambda^2 t),\quad \forall\,\lambda>0,
		\end{equation}	
is also a solution. Then we easily verify that the norm of $(\rho_\lambda, c_\lambda)$ in $L^\infty(0,T; L^{d/2}(\Omega))\times L^\infty(0,T; \dot W^{1,d}(\Omega))$ is scaling-invariant and thus we call  $ L^{d/2}(\Omega)\times \dot W^{1,d}(\Omega)$ a scaling-invariant space (or a critical space). Some discussions on blow-up criteria in sub-critical spaces for Keller--Segel models can be found in \cite{BBTW15, JWZ18, Xiang15}.

To formulate our results, we need to introduce some notion and notations. For any $\M\geq0,$ denote $L^p_\M(\Omega)$ ($1\leq p<\infty$) the closed convex subset of $L^p(\Omega)$ satisfying $\frac{1}{|\Omega|}\int_\Omega w dx=\M$ with $w\in L^p(\Omega).$ Note that if $\M=0$, $L^p_0(\Omega)$ is a Banach spaces and the following Poincar\'e's inequality holds:
\begin{equation}
	\|w\|_{L^p(\Omega)}\leq c\|\nabla w\|_{L^p(\Omega)},\qquad\text{for all}\;\;w\in L^p_0(\Omega)
\end{equation}and we denote $\lambda_1$  the first positive eigenvalue of the Neumann Lapacian operator such that
\begin{equation}\label{poin}
	\lambda_1\|w\|_{L^2(\Omega)}^2\leq \|\nabla w\|^2_{L^2(\Omega)} ,\qquad\text{for all}\;\;w\in L^2_0(\Omega).
\end{equation}

Now we are in a position to state our main results. The first one is concerned with global stability for the elliptic--parabolic Keller--Segel system with $\gamma=0.$
\begin{theorem}\label{TH0}
	Let $d\geq2.$ For any given constants $0<\M<1+\lambda_1$ and $\lambda'<\mu_0\triangleq\lambda_1(1-\frac{\M}{1+\lambda_1})$, there exists $\varepsilon_0>0$ depending on  $\lambda'$ and $\Omega$ such that for any non-negative initial datum $\rho_0\in C(\overline{\Omega})\cap L^1_\M(\Omega)$ satisfying  $\|\rho_0-\M\|_{L^{d/2}(\Omega)}\leq\varepsilon_0$, system \eqref{chemo1} has a unique global classical solution such that
	\begin{equation}
		\|\rho(\cdot, t)-\M\|_{L^\infty(\Omega)}+\|\nabla c(\cdot, t)\|_{L^\infty(\Omega)}\leq Ce^{-\lambda' t}\qquad\text{for all}\;\;t\geq1
	\end{equation}with some $C>0$.
\end{theorem}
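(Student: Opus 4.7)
The plan is a perturbative analysis of \eqref{chemo1} around the constant state, driven by a spectral-gap property for the linearization that degenerates precisely at $\M=1+\lambda_1$. First I would set $u=\rho-\M$ and $v=c-\M$. The mean-zero condition \eqref{restrict} together with mass conservation of the first equation guarantee $u(t)\in L^{d/2}_0(\Omega)$ for all $t\geq 0$. Using the elliptic relation $-\Delta v+v=u$ to eliminate $-\M\Delta v$ from $u_t-\Delta u=-\M\Delta v-\na\cdot(u\na v)$, I would rewrite the problem as
\begin{equation*}
u_t=\Delta u+\M u-\M v-\na\cdot(u\na v),\qquad v=(I-\Delta_N)^{-1}u,
\end{equation*}
where $\Delta_N$ is the Neumann Laplacian. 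Denote the linearized operator by $\L u:=\Delta u+\M u-\M(I-\Delta_N)^{-1}u$ on $L^p_0(\Omega)$.

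Next I would carry out a spectral analysis of $\L$. In the orthonormal eigenbasis $\{\vp_k\}_{k\geq 0}$ of $-\Delta_N$ with eigenvalues $0=\lambda_0<\lambda_1\le\lambda_2\le\cdots$, the operator $\L$ is diagonal on $L^2_0$ with eigenvalues
\begin{equation*}
-\mu_k=-\lambda_k+\M-\frac{\M}{1+\lambda_k}=-\lambda_k\Bigl(1-\frac{\M}{1+\lambda_k}\Bigr),\qquad k\geq 1.
\end{equation*}
Since $\M<1+\lambda_1$ and $k\mapsto 1-\M/(1+\lambda_k)$ is non-decreasing, we have $\mu_k\geq\mu_0>0$, so $e^{t\L}$ decays in $L^2_0$ at rate $\mu_0$. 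The heart of the argument is to upgrade this to an $L^p$--$L^q$ smoothing/decay bundle
\begin{equation*}
\|e^{t\L}f\|_{L^q(\Omega)}+t^{1/2}\|\na e^{t\L}f\|_{L^q(\Omega)}\leq C\,t^{-\frac{d}{2}(1/p-1/q)}e^{-\lambda' t}\|f\|_{L^p(\Omega)},\quad 1\leq p\leq q\leq\infty,\ f\in L^p_0.
\end{equation*}
I would obtain this by writing $\L=\Delta+K$ with $K:=\M I-\M(I-\Delta_N)^{-1}$ bounded (indeed smoothing) on every $L^p_0$, iterating the Duhamel identity $e^{t\L}=e^{t\Delta}+\int_0^t e^{(t-s)\Delta}Ke^{s\L}ds$, and combining the heat-semigroup $L^p$--$L^q$ bounds (which give the short-time singularity) with the $L^2$ exponential decay (which gives the long-time rate) via interpolation.

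With the linear estimates in hand, I would set up a fixed-point argument for the mild formulation
\begin{equation*}
u(t)=e^{t\L}u_0-\int_0^t e^{(t-s)\L}\na\cdot\bigl(u(s)\na v(s)\bigr)\,ds.
\end{equation*}
Elliptic regularity on $(I-\Delta_N)^{-1}$ yields $\|\na v(s)\|_{L^d}\leq C\|u(s)\|_{L^{d/2}}$, so the nonlinearity in $L^{d/2}$ is controlled by $\|u\|_{L^d}\|\na v\|_{L^d}$ (with smoothing $(t-s)^{-1/2}$ absorbing the divergence). I would work in the weighted critical space
\begin{equation*}
\|u\|_{X}:=\sup_{t>0}\Bigl(e^{\lambda' t}\|u(t)\|_{L^{d/2}}+t^{1/2}e^{\lambda' t}\|u(t)\|_{L^{d}}\Bigr),
\end{equation*}
where the scaling-invariant space $L^{d/2}$ is complemented by the decay-balanced $L^d$ piece needed to close the quadratic estimate. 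Standard bilinear bookkeeping then shows the Duhamel map is a contraction on a small ball of $X$ whenever $\|u_0\|_{L^{d/2}}\leq\e_0$ is small enough, providing a unique global mild solution with $\|u(t)\|_{L^{d/2}}\leq Ce^{-\lambda' t}\e_0$.

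The final step is to bootstrap to the asserted classical $L^\infty$ decay. Using the source $u\na v$, which already decays like $e^{-2\lambda' t}$ in suitable norms, and repeatedly invoking the semigroup bound with the gradient factor $t^{-1/2}$, I would propagate the exponential decay up the Lebesgue scale to $\|u(t)\|_{L^\infty}+\|\na v(t)\|_{L^\infty}\leq Ce^{-\lambda' t}$ for $t\geq 1$; parabolic Schauder estimates then give the classical regularity, while positivity of $\rho=u+\M$ follows from smallness. The main obstacle is the second step: extracting the full $L^p$--$L^q$ smoothing-plus-exponential decay package for $e^{t\L}$ uniformly on a bounded domain with Neumann conditions, keeping both the correct short-time singularity and the sharp rate $\lambda'<\mu_0$ in a single estimate; everything else reduces to a now-standard critical-space fixed-point scheme.
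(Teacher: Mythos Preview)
Your proposal is correct and follows essentially the same route as the paper: identify the linearized operator $\L$ on $L^p_0$ as a bounded perturbation of the Neumann Laplacian, extract the $L^2_0$ spectral gap $\mu_0=\lambda_1(1-\M/(1+\lambda_1))$, upgrade to $L^p$--$L^q$ decay estimates for $e^{t\L}$ via Duhamel against the heat semigroup (short time) combined with the $L^2$ decay (long time), and then close the nonlinear problem by a smallness argument in critical Lebesgue norms with a final $L^\infty$ bootstrap. The only differences are cosmetic: the paper obtains the $L^2_0$ rate by an energy estimate rather than your explicit eigenbasis computation, and it runs the nonlinear step as a continuity/``one-step contradiction'' argument on the local classical solution (comparing $u(t)$ with $e^{t\L}u_0$ on a maximal interval and showing the interval is infinite) rather than as a contraction in your weighted space $X$; the paper in fact remarks that either closure works.
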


The other result is about the doubly parabolic case with $\gamma=1$ which is given below.
\begin{theorem}\label{TH1}
		Let $d\geq2.$ For any given constants $0<\M<1+\lambda_1$ and $\mu'<\mu_1\triangleq\lambda_1-\frac12\bigg(\sqrt{4\lambda_1\M+1}-1\bigg)>0$, there exists $\varepsilon_0>0$ depending on $\M$, $\mu'$ and $\Omega$ such that for any non-negative initial data $(\rho_0,c_0)\in \left(C(\overline{\Omega})\cap L^1_\M(\Omega)\right)\times \left(C^1(\overline{\Omega})\cap L^1_\M(\Omega)\right)$ satisfying $\partial_\nu c_0=0$ on $\partial\Omega$ and  $\|\rho_0-\M\|_{L^{d/2}(\Omega)}+\|\nabla c_0\|_{L^d(\Omega)}\leq\varepsilon_0$, system \eqref{chemo1} has a unique global classical solution such that
	\begin{equation}
		\|\rho(\cdot,t)-\M\|_{L^\infty(\Omega)}+\|\nabla c(\cdot,t)\|_{L^\infty(\Omega)}\leq Ce^{-\mu' t}\qquad\text{for all}\;\;t\geq1
	\end{equation} with some $C>0.$
\end{theorem}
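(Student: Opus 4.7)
The plan is to linearize system~\eqref{chemo1} around the constant equilibrium $(\M,\M)$ and run a contraction/continuation argument built on sharp decay estimates for the associated linear semigroup. Setting $u=\rho-\M$, $v=c-\M$, hypothesis~\eqref{restrict} forces $\int_\Omega u_0\,dx=\int_\Omega v_0\,dx=0$, and these means remain zero along the flow, so the natural phase space is the mean-zero subspace of $L^{d/2}\times W^{1,d}$. The perturbation equations become
\begin{equation*}
u_t=\Delta u-\M\Delta v-\nabla\!\cdot(u\nabla v),\qquad v_t=\Delta v-v+u,
\end{equation*}
with homogeneous Neumann boundary conditions, so that the only nonlinearity is the bilinear divergence-form term $-\nabla\!\cdot(u\nabla v)$.

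The first step is a spectral analysis of the matrix operator $\mathcal{L}=\bigl(\begin{smallmatrix}\Delta&-\M\Delta\\1&\Delta-1\end{smallmatrix}\bigr)$ on the mean-zero subspace. Decomposing along the Neumann Laplacian eigenbasis $\{e_k\}_{k\ge 1}$ with $-\Delta e_k=\lambda_k e_k$, $\mathcal{L}$ restricts on the $k$-th mode to the $2\times 2$ matrix $M_k=\bigl(\begin{smallmatrix}-\lambda_k&\M\lambda_k\\1&-\lambda_k-1\end{smallmatrix}\bigr)$, whose characteristic polynomial $\mu^2+(2\lambda_k+1)\mu+\lambda_k(\lambda_k+1-\M)=0$ has real roots
\begin{equation*}
\mu_\pm^{(k)}=\tfrac{1}{2}\bigl(-(2\lambda_k+1)\pm\sqrt{1+4\M\lambda_k}\bigr).
\end{equation*}
Since $\M<1+\lambda_1\le 1+\lambda_k$, both roots are strictly negative; a direct computation (the map $\lambda\mapsto\mu_+(\lambda)$ is monotonically decreasing on $[\lambda_1,\infty)$ whenever $\M<1+\lambda_1$) shows that the supremum of $\operatorname{Re}\mu_{\pm}^{(k)}$ over $k\ge 1$ is attained at $k=1$, where it equals precisely $-\mu_1$. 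Hence the semigroup $\{S(t)\}_{t\ge 0}$ generated by $\mathcal{L}$ decays mode-by-mode at any rate $e^{-\mu' t}$ with $\mu'<\mu_1$.

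The core technical task, flagged in the introduction, is to translate this spectral decay into coupled $L^p$--$L^q$ estimates for $S(t)$ of the schematic form
\begin{equation*}
\|[S(t)(f,g)]_1\|_{L^p}+\|\nabla[S(t)(f,g)]_2\|_{L^p}\le Ce^{-\mu' t}\bigl(1+t^{-\frac{d}{2}(\frac{1}{q}-\frac{1}{p})}\bigr)\bigl(\|f\|_{L^q}+\|\nabla g\|_{L^q}\bigr),
\end{equation*}
together with companion bounds that absorb one spatial derivative into the semigroup at the cost of an extra factor $t^{-1/2}$. These are obtained by combining the modal spectral bound above with the smoothing and Gaussian bounds for the Neumann heat kernel, viewing $\mathcal{L}$ as a coupled non-self-adjoint perturbation of its diagonal part. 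With these in hand, rewrite the Cauchy problem via Duhamel's formula
\begin{equation*}
(u(t),v(t))=S(t)(u_0,v_0)+\int_0^t S(t-s)\bigl(-\nabla\!\cdot(u\nabla v)(s),\,0\bigr)\,ds,
\end{equation*}
and close a fixed-point (or a priori-estimate-plus-continuation) argument in a scaling-adapted Kato-type space whose norm controls $\|u(\cdot,t)\|_{L^{d/2}}$ and $\|\nabla v(\cdot,t)\|_{L^d}$ together with suitably time-weighted stronger norms. The divergence-form nonlinearity is then handled by H\"older inequalities such as $\|u\nabla v\|_{L^{d/2}}\le\|u\|_{L^d}\|\nabla v\|_{L^d}$ combined with the derivative-absorbing piece of the semigroup bound; standard beta-function bookkeeping yields the requisite bilinear estimate, and smallness of the critical-norm data closes the loop.

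The principal obstacle lies precisely in deriving these uniform coupled $L^p$--$L^q$ bounds for $S(t)$: $\mathcal{L}$ is non-self-adjoint and the cross term $-\M\Delta v$ shares the derivative order of the diagonal part, so it cannot be treated as a lower-order perturbation of the Neumann heat semigroup. A careful mode-by-mode analysis of $e^{tM_k}$ with constants uniform in $k$, combined with sharp pointwise heat-kernel bounds, appears unavoidable and must be carried out delicately in order to capture the sharp rate $\mu_1$. Once the critical-norm global solution is constructed, upgrading to the $L^\infty$-exponential decay stated for $t\ge 1$ is a standard parabolic bootstrap via the smoothing of the semigroup.
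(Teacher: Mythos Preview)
Your overall architecture—linearize, obtain sharp exponential decay for the linear semigroup, rewrite via Duhamel, close by a continuation argument in scaling-critical norms, then bootstrap—matches the paper's, and your eigenvalue computation for the $2\times 2$ blocks $M_k$ is a clean alternative derivation of the rate $\mu_1$ (the paper obtains the same number via a weighted $L^2$ energy estimate rather than an eigenvalue computation).

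The gap is exactly where you flag ``the principal obstacle'': the coupled $L^p$--$L^q$ bounds for $S(t)$. Your proposed route—uniform-in-$k$ control of $e^{tM_k}$ combined with pointwise heat-kernel bounds—does not yield $L^p$ estimates in any direct way, because the Neumann eigenfunction expansion does not respect $L^p$ norms for $p\neq 2$; passing from mode-wise decay to $L^p$ mapping properties would require a spectral multiplier theorem for $-\Delta_N$ with symbols that are matrix-valued and carry the extra $t^{-1/2}$ factor for the divergence-form input, and you have not indicated how to establish the requisite symbol bounds. You also note that the cross term $-\M\Delta v$ has the same order as the diagonal, so treating $\mathcal{L}$ as a perturbation of $\operatorname{diag}(\Delta,\Delta-1)$ in the sense of bounded perturbations of analytic semigroups gives existence of $S(t)$ but not the sharp $L^p$--$L^q$ smoothing you need.

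The paper sidesteps this entirely. It writes the \emph{linear} system itself in Duhamel form against the diagonal heat semigroup, so that the cross term $\M\Delta\tilde v$ appears as a source; for short time ($t\le 1$) it applies the standard heat $L^p$--$L^q$ estimates together with a Gronwall argument (and a finite iteration in both the time interval and the Lebesgue exponent) to obtain the correct short-time smoothing rates, and for $t\ge 1$ it routes through the $L^2$ exponential decay (your $\mu_1$, their energy estimate) sandwiched between two short-time smoothing steps. A companion lemma handles the case $u_0=\nabla\!\cdot w$, $v_0=0$, which is what the nonlinear Duhamel term needs. This approach is elementary—only heat-semigroup $L^p$--$L^q$ bounds, Gronwall, and the $L^2$ decay are used—and avoids any multiplier or kernel machinery for the coupled operator. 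If you want to make your proposal complete, the cleanest fix is to replace the mode-by-mode argument with this Duhamel-plus-Gronwall scheme for the linear problem.
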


The case $\M=0$  corresponds to the small-initial data results established for the Keller--Segel system in bounded domains in \cite{Win10,Cao} and for the Keller--Segel--Navier--Stokes system in $\mathbb{R}^d$ in \cite{KMS16}. The former two papers realized the proof by a one-step contradiction argument while in the other one it was proved based on the implicit function theorem.  The classical $L^p-L^q$ decay estimates for the heat semigroup $e^{t\Delta}$  plays an essential role in both cases since the first equation of \eqref{chemo1} can be regarded as a heat equation for $\rho$ with a quadratic perturbation $-\nabla\cdot(\rho\nabla c)$. With small initial data, the solution to the nonlinear equation should behave like a solution to the heat equation with higher order perturbations.

In the case $\M>0$, the key step consists in the analysis of the corresponding linearized system around $(\M,\M)$ and there comes certain new  difficulties. To see this point, we introduce the system for the reduced quantities, i.e., for any given $\M>0,$ let $u=\rho-\M$ and $v=c-\M$ be the reduced cell density  and chemical concentration, respectively. It follows that
\begin{equation}
	\begin{cases}\label{chemo2}
		u_t-\Delta u+\M\Delta v=-\nabla\cdot(u\nabla v),\qquad &x\in\Omega,\;t>0\\
		\gamma v_t-\Delta v+v= u,\qquad &x\in\Omega,\;t>0\\
		\frac{\partial u}{\partial\nu}=\frac{\partial v}{\partial\nu}=0,\qquad &x\in\partial\Omega,\;t>0\\
		u(x,0)=u_0(x)=\rho_0-\M,\;\;\gamma v(x,0)=\gamma v_0(x)=\gamma(c_0-\M),\qquad & x\in\Omega.
	\end{cases}
\end{equation}
Now our problem transforms to the existence of global solutions with small initial data for system \eqref{chemo2} which can be regarded as a quadratic perturbation $-\nabla \cdot(u\nabla v)$ of its corresponding linearized system which reads:
\begin{equation}
\begin{cases}\label{chemo2l}
\u_t-\Delta \u+\M\Delta \v=0,\qquad &x\in\Omega,\;t>0\\
\gamma \v_t-\Delta \v+\v= \u,\qquad &x\in\Omega,\;t>0\\
\frac{\partial \u}{\partial\nu}=\frac{\partial \v}{\partial\nu}=0,\qquad &x\in\partial\Omega,\;t>0.
\end{cases}
\end{equation}
The analysis of the decay behavior of solutions to the above linearized system becomes quite important and constitutes the major part of the present paper.

Note that {\it cross diffusion} appears when $\M>0$ even in the linearized system \eqref{chemo2l}, while when $\M=0$, the corresponding linearized system is much simpler since it contains a decoupled heat equation for $\u$ (or $\rho$) that is why the proof in \cite{Win10,Cao,KMS16} strongly relies on the decay estimates for $e^{t\Delta}.$ However, this is insufficient in our case due to the presence of cross diffusion coupling which brings some severe difficulties in the analysis especially for the doubly parabolic case $\gamma=1$. In order to study the decay behavior of solutions to \eqref{chemo2l}, we first use perturbation theory for semigroups and some delicate energy estimates in Hilbert spaces to derive certain exponentially decay estimates for $(\u,\nabla \v)$ in $L^2_0(\Omega)\times L^2_0(\Omega)$ with explicit decay rates (see Lemma \ref{exdlem0} and Lemma \ref{lmexdecaypp}).

 Since we aim to establish global stability in the scaling-invariant Lebesgue spaces which as we mentioned above is $L^{d/2}(\Omega)\times L^d(\Omega)$ for $(\rho,\nabla c)$, what we need next is deriving  similar $L^p-L^q$ decay estimates  for the semigroup associated with the linearized system \eqref{chemo2l}, denoted in the sequel by $e^{t\mathcal{L}}$ for $\gamma=0$ and $e^{t\mathcal{A}}$ for $\gamma=1$, respectively. More precisely, we need  decay estimates of $e^{t\mathcal{A}}(u_0,v_0)$ in $L^p$ in terms of $\|u_0\|_{L^{d/2}}+\|\nabla v_0\|_{L^d}$. The proof is nontrivial due to  the cross diffusion coupling as well as the difference in the critical Lebesgue exponents between $u_0$ and $\nabla v_0$. Exploiting the Gronwall inequalities, the well-known $L^p-L^q$ estimates for the Neumann heat semigroups, the obtained exponentially decay properties and more importantly, by a delicate successive iteration argument carried out simultaneously with respect to time and Lebesgue exponents, we successfully establish the desired decay estimates (see Lemma \ref{keylem1} and Lemma \ref{Cor}). With this at hand, we can finish our proof  either by adapting the one-step contradiction argument in \cite{Win10,Cao} or by implicit function theorem as done in \cite{KMS16}. Since the local well-posedeness and blow-up criteria in sub-critical spaces for classical solutions is well-known (Lemma \ref{loex1}), we find it more convenient to discuss in the former way.

We point out that  assumption $0<\M<1+\lambda_1$ or equivalently, $0<m=\M|\Omega|<(1+\lambda_1)|\Omega|$ is necessary for the exponentially decay property of solutions for \eqref{chemo2l} in $L^2_0(\Omega)\times H^1(\Omega)\cap L^2_0(\Omega)$ which could be easily verified by similar linear stability analysis as done in  \cite{KS70}. For the case $\Omega=\mathcal{B}$ being a disk of any radius in $\mathbb{R}^2$, it can be calculated that $\lambda_1|\mathcal{B}|\approx 1.84118^2\pi\approx3.39\pi<8\pi$ (cf. \cite{Ash}). As is well known, for $\gamma=0$ and a disk with any radius, any initial datum with total mass less than $8\pi$ should develop a globally bounded classical solution. However due to our result, we need the radius to be large when the total mass is greater than $\lambda_1|\mathcal{B}|\approx3.39\pi$. The reason is that our argument strongly relies on the exponentially decay property of the linearized system (also the solution obtained for the original system has an exponentially decay property) and as we mentioned above, a strong condition $0<\M<1+\lambda_1$ is then needed. Nevertheless, this smallness condition links the dynamics of solutions explicitly with the geometric quantity $|\Omega|$ which is seemingly new and interesting. Roughly speaking, our result implies that with any total mass there will be no overcrowding of cells provided that they initially distribute almost homogeneous spatially and the bounded domain is large enough, while the existing critical-mass theory is independent of the volume of the region.

The rest of this paper is organized as follows. In Section 2, we introduce some useful lemmas which are needed in the subsequent proof. In Section 3, we analyze the decay properties of the corresponding linearized systems in both $\gamma=0$ and $\gamma=1$. Delicate $L^p-L^q$ estimates are given which are crucial in our proof. In the last section, we prove our main results using the one-step contradiction argument borrowed from \cite{Win10, Cao} with slight modification.

\section{Preliminaries} 

First, we introduce the result on existence and uniqueness of local classical solutions to the Keller--Segel system \eqref{chemo1}. The proof for the doubly parabolic case $\gamma=1$ can be found in \cite{BBTW15, HW05} and the simplified case $\gamma=0$ can be done in almost the same way except when dealing with the second elliptic equation, classical elliptic theory together with Sobolev embeddings are needed. 
\begin{lemma}[Local well-posedness] \label{loex1}Assume $\rho_0\in C(\overline{\Omega})$ and $\gamma c_0\in W^{1,\sigma}(\Omega)$ are non-negative with $\sigma>d$. Then there exists $T_{\mathrm{max}}>0$ such that \eqref{chemo1} possesses a unique  classical solution $(\rho,c)\in \left(C([0,T_{\mathrm{max}})\times\overline{\Omega})\cap C^{2,1}(\overline{\Omega}\times (0,T_{\mathrm{max}}))\right)^2$ which is non-negative. Moreover, $T_{\mathrm{max}}<\infty$ if and only if
\begin{equation}
	\limsup\limits_{t\nearrow T_{\mathrm{max}}}\|\rho(\cdot,t)\|_{L^\infty(\Omega)}=\infty.
\end{equation}
	For any $\theta>\frac{d}{2}$, if the solution of \eqref{chemo1} satisfies
	\begin{equation}
		\|\rho(\cdot,t)\|_{L^\theta(\Omega)}<\infty\qquad\text{for all}\;t\in(0,T_{\mathrm{max}}),
	\end{equation}then $T_{\mathrm{max}}=\infty,$ and there holds
	\begin{equation}
		\sup\limits_{t>0}(\|\rho(\cdot,t)\|_{L^\infty(\Omega)}+\|c(\cdot,t)\|_{W^{1,\infty}(\Omega)})<\infty.
	\end{equation}
\end{lemma}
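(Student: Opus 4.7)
The plan is to prove local existence via a Banach fixed-point argument, nonnegativity via the maximum principle, the first extensibility criterion by a standard continuation argument, and the second criterion by a bootstrap bridging $L^\theta$ with $\theta>d/2$ up to $L^\infty$ via semigroup/elliptic estimates.

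For local existence and uniqueness, I would work on a short time interval $[0,T]$ in the complete metric space $X_T=\{(\rho,c)\in C([0,T];C(\overline{\Omega}))\times C([0,T];W^{1,\sigma}(\Omega)):\;\rho(0)=\rho_0,\;\gamma c(0)=\gamma c_0,\;\|\rho-\rho_0\|+\|\nabla c-\gamma\nabla c_0\|\le R\}$ with $\sigma>d$. Given $(\bar\rho,\bar c)\in X_T$, define $(\rho,c)=\Phi(\bar\rho,\bar c)$ by solving the two linear subproblems in Duhamel form: for $\gamma=1$, $c(t)=e^{-t(I-\Delta)}c_0+\int_0^t e^{-(t-s)(I-\Delta)}\bar\rho(s)\,ds$ and $\rho(t)=e^{t\Delta}\rho_0-\int_0^t e^{(t-s)\Delta}\nabla\!\cdot(\bar\rho\,\nabla\bar c)(s)\,ds$; for $\gamma=0$, $c=(I-\Delta)^{-1}\bar\rho$ with classical Schauder/$W^{2,\sigma}$ elliptic theory and Sobolev embedding $W^{2,\sigma}\hookrightarrow W^{1,\infty}$. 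Standard Neumann heat semigroup estimates $\|\nabla e^{t\Delta}\|_{L^\sigma\to L^\sigma}\le Ct^{-1/2}$ together with the quadratic structure of the nonlinearity show $\Phi$ maps $X_T$ into itself and is a contraction for $T$ sufficiently small, giving a unique local classical solution by parabolic bootstrap.

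Nonnegativity of $\rho$ follows by writing the first equation in non-divergence form $\rho_t-\Delta\rho+\nabla c\cdot\nabla\rho+(\Delta c)\rho=0$ and applying the weak parabolic maximum principle to $\rho^-$, once regularity of $\nabla c,\Delta c$ on $[0,T_{\max})$ has been secured locally; nonnegativity of $c$ comes from the maximum principle on the second equation with nonnegative right-hand side $\rho$ and Neumann data. The blow-up criterion \mbox{$T_{\max}<\infty \Longleftrightarrow \limsup_{t\nearrow T_{\max}}\|\rho\|_{L^\infty}=\infty$} is the usual continuation statement: if $\|\rho(\cdot,t)\|_{L^\infty}$ stayed bounded up to $T_{\max}$, then parabolic/elliptic regularity for the $c$-equation gives $c\in L^\infty(0,T_{\max};W^{1,\infty})$, and then parabolic Schauder theory for the $\rho$-equation produces uniform $C^{2+\alpha,1+\alpha/2}$ bounds that allow restarting the fixed-point scheme at a time close to $T_{\max}$, contradicting maximality.

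The main obstacle is the finer criterion asserting that a single $L^\theta$-bound with $\theta>d/2$ is already sufficient to prevent blow-up. The idea is a two-step bootstrap. First, upgrade $\nabla c$: from $\rho\in L^\infty(0,T_{\max};L^\theta)$ with $\theta>d/2$, the semigroup estimate $\|\nabla e^{-t(I-\Delta)}\|_{L^\theta\to L^{q}}\le Ct^{-1/2-\frac{d}{2}(1/\theta-1/q)}e^{-t}$ (or elliptic $W^{2,\theta}\hookrightarrow W^{1,q}$ when $\gamma=0$) is integrable in time for some $q>d$ precisely because $\theta>d/2$, hence $\nabla c\in L^\infty(0,T_{\max};L^q)$ with $q>d$. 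Second, iterate in the $\rho$-equation: from
\[
\rho(t)=e^{t\Delta}\rho_0-\int_0^t \nabla\cdot e^{(t-s)\Delta}\bigl(\rho\,\nabla c\bigr)(s)\,ds,
\]
Hölder and $\|\nabla e^{t\Delta}\|_{L^r\to L^{p}}\le Ct^{-1/2-\frac{d}{2}(1/r-1/p)}$ give $\|\rho(t)\|_{L^{p_{k+1}}}\le C+C\sup_{s<t}\|\rho(s)\|_{L^{p_k}}$ for a sequence of exponents $p_k$ strictly increasing, with the time singularity remaining integrable thanks to $q>d$. A finite number of iterations yields $\rho\in L^\infty(0,T_{\max};L^p)$ for every $p<\infty$, and a final step (with any $p$ large enough that the time-kernel is integrable in $L^\infty$) produces the desired $L^\infty$-bound. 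By the first extensibility criterion this forces $T_{\max}=\infty$, and the same semigroup/elliptic chain then yields $\sup_{t>0}(\|\rho\|_{L^\infty}+\|c\|_{W^{1,\infty}})<\infty$, finishing the proof.
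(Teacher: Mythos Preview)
The paper does not give its own proof of this lemma: it simply cites \cite{BBTW15,HW05} for the doubly parabolic case $\gamma=1$ and remarks that the parabolic--elliptic case $\gamma=0$ follows in the same way using classical elliptic theory and Sobolev embeddings. Your sketch is precisely the standard argument underlying those references---Banach fixed point in $C([0,T];C(\overline{\Omega}))\times C([0,T];W^{1,\sigma})$, maximum principle for nonnegativity, continuation for the $L^\infty$ blow-up criterion, and the $L^\theta\to L^\infty$ bootstrap via heat-semigroup smoothing (or elliptic $W^{2,\theta}$ regularity when $\gamma=0$)---so there is no methodological divergence to discuss.

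One small point worth tightening in your write-up: the stated hypothesis for the second criterion is merely that $\|\rho(\cdot,t)\|_{L^\theta}<\infty$ for each $t\in(0,T_{\max})$, not that $\sup_{t}\|\rho(\cdot,t)\|_{L^\theta}<\infty$. In the standard references this is handled either by noting that the pointwise finiteness together with the local theory already yields a uniform bound on compact subintervals and then running the bootstrap on $[\tau,T_{\max})$, or (more commonly) by interpreting the hypothesis as a uniform-in-time bound. Your bootstrap as written implicitly uses $\sup_{s<t}\|\rho(s)\|_{L^{p_k}}$, so you should make explicit how the uniform bound is obtained from the pointwise one, or state the hypothesis in the uniform form.
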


 The following result shows that a lower-order perturbation to a sectorial operator is still a sectorial operator \cite{Zheng04,Engel}.
 \begin{lemma}\label{pertur1}
 	Suppose that $A$ is a sectorial operator and $B$ is a linear operator with $D(A)\subset D(B)$ such that for any $x\in D(A)$, there holds
 	\begin{equation}
 		\|Bx\|\leq \varepsilon \|Ax\|+K_\varepsilon \|x\|
 	\end{equation}where $\varepsilon>0$ is an arbitrary small constant and $K_\varepsilon$ is a positive constant depending on $\varepsilon.$ Then $A+B$ is sectorial.
 \end{lemma}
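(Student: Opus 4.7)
My plan is to follow the standard Neumann-series perturbation argument for the resolvent of a sectorial operator. Since $A$ is sectorial, there exist $a\in\mathbb{R}$, $\phi\in(\pi/2,\pi)$ and $M>0$ such that the sector $S_{a,\phi}=\{\lambda\in\mathbb{C}:\lambda\neq a,\ |\arg(\lambda-a)|<\phi\}$ is contained in the resolvent set $\rho(A)$, with $\|R(\lambda,A)\|\leq M/|\lambda-a|$ for all $\lambda\in S_{a,\phi}$. The goal is to exhibit a shifted sector, say $S_{a',\phi}$ with some $a'\geq a$, that lies in $\rho(A+B)$ together with a resolvent bound of the same form, which by definition gives sectoriality of $A+B$.

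The core step is to control $\|BR(\lambda,A)\|$ for $\lambda$ in a suitable subsector. Given $y$ in the Banach space, set $x=R(\lambda,A)y\in D(A)\subset D(B)$ and apply the hypothesis to obtain
\begin{equation*}
\|Bx\|\leq \varepsilon\|Ax\|+K_\varepsilon\|x\|.
\end{equation*}
Using the identity $Ax=\lambda x-y$ and the sectorial bound on $R(\lambda,A)$, I expect
\begin{equation*}
\|Ax\|\leq |\lambda|\|x\|+\|y\|\leq \left(\frac{M|\lambda|}{|\lambda-a|}+1\right)\|y\|,\qquad \|x\|\leq \frac{M}{|\lambda-a|}\|y\|.
\end{equation*}
Combining these yields
\begin{equation*}
\|BR(\lambda,A)\|\leq \varepsilon\left(\frac{M|\lambda|}{|\lambda-a|}+1\right)+\frac{K_\varepsilon M}{|\lambda-a|}.
\end{equation*}
As $|\lambda-a|\to\infty$ inside the sector, $M|\lambda|/|\lambda-a|\to M$, so the first term tends to $\varepsilon(M+1)$ while the second tends to $0$. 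Picking $\varepsilon$ with $\varepsilon(M+1)<1/4$ (which is allowed because the hypothesis is uniform in arbitrary small $\varepsilon$), and then choosing $a'>a$ large enough, I can guarantee $\|BR(\lambda,A)\|\leq 1/2$ for every $\lambda\in S_{a',\phi}$.

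With this in hand, the factorization
\begin{equation*}
\lambda-(A+B)=\bigl(I-BR(\lambda,A)\bigr)(\lambda-A)
\end{equation*}
is invertible on $S_{a',\phi}$ by a convergent Neumann series, giving
\begin{equation*}
R(\lambda,A+B)=R(\lambda,A)\bigl(I-BR(\lambda,A)\bigr)^{-1},\qquad \|R(\lambda,A+B)\|\leq \frac{2M}{|\lambda-a|}\leq \frac{M'}{|\lambda-a'|}
\end{equation*}
for an appropriate $M'$, which is exactly the sectorial estimate with vertex $a'$. The main subtlety I foresee is not analytical but bookkeeping: making sure the shift from $a$ to $a'$ is chosen so that \emph{both} the residual term $K_\varepsilon M/|\lambda-a|$ is absorbed and the final bound is genuinely of the form $M'/|\lambda-a'|$ uniformly in the sector; this forces $a'$ to be chosen only after $\varepsilon$ (and hence $K_\varepsilon$) is fixed. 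Everything else is standard resolvent algebra, and I would simply cite Lemma~\ref{pertur1}'s analogue in Henry or Pazy for any routine detail.
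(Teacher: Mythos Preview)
Your argument is the standard and correct Neumann-series perturbation proof for this result. Note, however, that the paper does not actually give a proof of this lemma: it is stated as a known fact and attributed to the references \cite{Zheng04,Engel}. Your write-up is precisely the textbook argument found in those sources (and in Henry, Pazy, etc.), so there is nothing to compare---you have supplied the proof that the paper chose to omit.
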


The following lemma presents an estimate for frequently used integrals throughout this paper, the proof of which can be found in \cite{Win10}.
 \begin{lemma}\label{lmint}
 	Suppose $0<\alpha<1$, $0<\beta<1$, $\gamma>0$, $\delta>0$ and $\gamma\neq\delta$. Then there holds
 	\begin{equation}
 		\int_0^t(1+(t-s)^{-\alpha})e^{-\gamma(t-s)}(1+s^{-\beta})e^{-\delta s}ds\leq C(\alpha,\beta,\delta,\gamma)(1+t^{\min\{0,1-\alpha-\beta\}})e^{-\min\{\gamma,\delta\}t}
 	\end{equation}for all $t>0$, where $C(\alpha,\beta,\delta,\gamma)=C\cdot(\frac{1}{|\delta-\gamma|}+\frac{1}{1-\alpha}+\frac{1}{1-\beta})$.
 \end{lemma}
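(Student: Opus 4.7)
My plan is to expand $(1+(t-s)^{-\alpha})(1+s^{-\beta})=1+(t-s)^{-\alpha}+s^{-\beta}+(t-s)^{-\alpha}s^{-\beta}$ and estimate the four resulting integrals $J_1,\ldots,J_4$ separately. Setting $\mu:=\min\{\gamma,\delta\}$ I would factor the exponential as $e^{-\gamma(t-s)-\delta s}=e^{-\mu t}\,e^{-(\gamma-\mu)(t-s)-(\delta-\mu)s}$: exactly one of $\gamma-\mu,\delta-\mu$ equals $|\gamma-\delta|>0$ and the other vanishes, so after pulling out the target factor $e^{-\mu t}$ the reduced integrand retains a one-sided exponential decay that can be paired with the singular factor on the opposite side.

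The regular integral $J_1=\int_0^t e^{-\gamma(t-s)-\delta s}\,ds$ integrates explicitly to $(e^{-\delta t}-e^{-\gamma t})/(\gamma-\delta)$, directly producing the bound $|\delta-\gamma|^{-1}e^{-\mu t}$ and the $|\delta-\gamma|^{-1}$ contribution to the final constant. For the singly-singular $J_2,J_3$, after the substitution $\tau=t-s$ (resp.\ $s$) the reduced integrand is of the form $\tau^{-\alpha}e^{-|\gamma-\delta|\tau}$ or its $s^{-\beta}$ analogue. If the surviving exponential falls on the singular side, I would extend the integration to $[0,\infty)$ and use $\int_0^\infty\tau^{-\alpha}e^{-|\gamma-\delta|\tau}\,d\tau=\Gamma(1-\alpha)|\gamma-\delta|^{\alpha-1}$; if it falls on the opposite side, I would split at $t/2$, bound $\tau^{-\alpha}\leq(t/2)^{-\alpha}$ on the non-singular half, and integrate $\int_0^{t/2}\tau^{-\alpha}\,d\tau=(t/2)^{1-\alpha}/(1-\alpha)$ on the singular half, which is where the $(1-\alpha)^{-1}$ (resp.\ $(1-\beta)^{-1}$) prefactor enters.

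The doubly-singular $J_4=\int_0^t(t-s)^{-\alpha}s^{-\beta}e^{-\gamma(t-s)-\delta s}\,ds$ is the crux, handled by a split at $s=t/2$: on each half, one of $(t-s)^{-\alpha}$ or $s^{-\beta}$ is pointwise dominated by $(t/2)^{-\alpha}$ or $(t/2)^{-\beta}$, while the other singularity is integrated out against the surviving exponential (either by direct evaluation, or by a Gamma-function bound when the decay falls on the singular side). For short times $t\leq 1$ this produces a term of order $t^{1-\alpha-\beta}$, which matches the exponent $\min\{0,1-\alpha-\beta\}$ when the latter is negative; for $t\geq 1$ the residual polynomial factors are absorbed by the outer $e^{-\mu t}$ to leave a clean $Ce^{-\mu t}$. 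The main obstacle will be careful bookkeeping of constants so that the prefactor collapses precisely into $C(|\gamma-\delta|^{-1}+(1-\alpha)^{-1}+(1-\beta)^{-1})$ and, more delicately, ensuring that the sharp exponential rate $\mu$ (rather than $\mu/2$) survives every $t/2$-split; this is why the explicit evaluation of $J_1$ and the Gamma-function bound on the appropriately chosen halves of $J_2,J_3,J_4$, rather than crude estimates, are indispensable.
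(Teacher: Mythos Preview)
The paper does not actually prove this lemma: it is stated as a preliminary result with the remark ``the proof of which can be found in \cite{Win10}'' and no argument is given in the paper itself. Your proposed strategy---expanding the product, extracting the factor $e^{-\mu t}$ with $\mu=\min\{\gamma,\delta\}$, and handling the four resulting integrals by a combination of explicit evaluation, Gamma-integral bounds, and splitting at $t/2$---is the standard route (and essentially the one Winkler uses in the cited reference), and it is correct.

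One caveat on the bookkeeping you flagged: when the surviving exponential falls on the singular side and you invoke $\int_0^\infty \tau^{-\alpha}e^{-|\gamma-\delta|\tau}\,d\tau=\Gamma(1-\alpha)\,|\gamma-\delta|^{\alpha-1}$, the resulting constant is a \emph{product} of a factor behaving like $(1-\alpha)^{-1}$ and one behaving like $|\gamma-\delta|^{\alpha-1}$, not a clean sum. To fit this into the displayed form $C\bigl(|\gamma-\delta|^{-1}+(1-\alpha)^{-1}+(1-\beta)^{-1}\bigr)$ you will need the elementary observation $|\gamma-\delta|^{\alpha-1}\le 1+|\gamma-\delta|^{-1}$ together with a crude product-to-sum estimate (or simply accept that the constant depends on the parameters in the stated qualitative way; the paper only ever uses the lemma with fixed exponents, so the precise additive structure of $C(\alpha,\beta,\gamma,\delta)$ is never actually exploited). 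Your concern about preserving the sharp rate $\mu$ through the $t/2$-splits is well placed but harmless here: since after factoring out $e^{-\mu t}$ only a \emph{one-sided} decay $e^{-|\gamma-\delta|\cdot}$ remains, there is no loss of exponential rate in the split.
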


  Last, we recall the important $L^p-L^q$ estimates for the Neumann heat semigroup on bounded domains (see e.g., \cite{Cao,Win10}).
 \begin{lemma}\label{lmpq}
 	Suppose $\{e^{t\Delta}\}_{t\geq0}$ is the Neumann heat semigroup in $\Omega$, and $\lambda_1>0$ denote the first nonzero eigenvalue of $-\Delta$ in $\Omega$ under Neumann boundary conditions. Then there exist $k_1,..., k_4>0$ which only depend on $\Omega$ such that the following properties hold:
 	\begin{enumerate}[(i)]
 		\item If $1\leq q\leq p\leq \infty,$ then
 	\begin{equation}
 		\|e^{t\Delta}w\|_{L^p(\Omega)}\leq k_1(1+t^{-\frac{d}{2}(\frac1q-\frac1p)})e^{-\lambda_1t}\|w\|_{L^q(\Omega)}\qquad\text{for all}\;\;t>0
 	\end{equation}for all $w\in L^q_0(\Omega)$;
 	\item  If $1\leq q\leq p\leq \infty,$ then
 	\begin{equation}
 		\|\nabla e^{t\Delta}w\|_{L^p(\Omega)}\leq k_2(1+t^{-\frac12-\frac{d}{2}(\frac1q-\frac1p)})e^{-\lambda_1t}\|w\|_{L^q(\Omega)}\qquad\text{for all}\;\;t>0
 	\end{equation}for each $w\in L^q(\Omega)$;
 	\item If $2\leq q\leq p<\infty,$ then
 	\begin{equation}
 		\|\nabla e^{t\Delta}w\|_{L^p(\Omega)}\leq k_3e^{-\lambda_1 t}(1+t^{-\frac{d}{2}(\frac1q-\frac1p)})\|\nabla w\|_{L^q(\Omega)}\qquad\text{for all}\;\;t>0
 	\end{equation}for all $w\in W^{1,p}(\Omega)$;
 	\item  If $1<q\leq p\leq \infty,$ then
 	\begin{equation}
 		\|e^{t\Delta}\nabla \cdot w\|_{L^p(\Omega)}\leq k_4(1+t^{-\frac12-\frac{d}{2}(\frac1q-\frac1p)})e^{-\lambda_1 t}\|w\|_{L^q(\Omega)}\quad\text{for all}\;\;t>0
 	\end{equation}for any $w\in (W^{1,p}(\Omega))^d.$
 	\end{enumerate}
 	 	 \end{lemma}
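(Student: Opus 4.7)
The plan is to prove estimates (i)--(iv) by splitting into a short-time regime $0<t\le 1$ and a long-time regime $t\ge 1$. In the short-time regime I would rely on Gaussian-type pointwise bounds for the Neumann heat kernel; in the long-time regime I would exploit the spectral gap $\lambda_1$ of $-\Delta_N$ on $L^2_0(\Omega)$. Analyticity of the semigroup then produces the gradient bounds, and a duality argument yields the divergence bound.

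First I would invoke the classical Davies--Varopoulos pointwise bounds for the Neumann heat kernel $p_t(x,y)$ on a smooth bounded domain: there exist $C,c>0$ such that for all $0<t\le 1$,
\begin{equation*}
0\le p_t(x,y)\le C\,t^{-d/2}\exp\!\Big(-c\frac{|x-y|^2}{t}\Big),\qquad |\nabla_x p_t(x,y)|\le C\,t^{-(d+1)/2}\exp\!\Big(-c\frac{|x-y|^2}{t}\Big).
\end{equation*}
Young's convolution inequality applied to $e^{t\Delta}w(x)=\int_\Omega p_t(x,y)w(y)\,dy$ then yields, for $0<t\le 1$, $\|e^{t\Delta}w\|_{L^p}\le C t^{-\frac{d}{2}(\frac1q-\frac1p)}\|w\|_{L^q}$ and $\|\nabla e^{t\Delta}w\|_{L^p}\le C t^{-\frac12-\frac{d}{2}(\frac1q-\frac1p)}\|w\|_{L^q}$, which account for the singular prefactors in (i) and (ii) on this range.

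For $t\ge 1$ I rely on the spectral gap. Since $-\Delta_N$ is self-adjoint and nonnegative with kernel spanned by the constants and spectrum contained in $\{0\}\cup[\lambda_1,\infty)$, the functional calculus gives $\|e^{t\Delta}\|_{\mathcal L(L^2_0)}\le e^{-\lambda_1 t}$. For (i), I factor $e^{t\Delta}=e^{\Delta/2}\circ e^{(t-1)\Delta}\circ e^{\Delta/2}$; on a mean-zero input, the inner factor contributes $e^{-\lambda_1(t-1)}$ while the outer factors of fixed time $1/2$ supply the $L^q\to L^2$ and $L^2\to L^p$ smoothing from the Gaussian bound. For (ii) I write $\nabla e^{t\Delta}w=(\nabla e^{\Delta/2})\circ e^{(t-1/2)\Delta}(w-\overline w)$, reducing the problem to (i) applied on the time range $[0,t-1/2]$ together with the fixed-time gradient bound $\|\nabla e^{\Delta/2}\|_{L^p\to L^p}\le C$.

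The main obstacle is (iii), a $\nabla\to\nabla$ bound that cannot be obtained by treating $\nabla w$ as a generic initial datum. I would argue as follows: since $e^{t\Delta}$ preserves the mean, $u(t):=e^{t\Delta}w-\overline w\in L^2_0$ and $\nabla u=\nabla e^{t\Delta}w$. The spectral theorem on $L^2_0$ yields, for $q=p=2$,
\begin{equation*}
\|\nabla e^{t\Delta}w\|_{L^2}^2=\langle -\Delta_N u(t),u(t)\rangle\le e^{-2\lambda_1 t}\langle -\Delta_N u(0),u(0)\rangle=e^{-2\lambda_1 t}\|\nabla w\|_{L^2}^2.
\end{equation*}
The general case $2\le q\le p<\infty$ follows by interpolating this $L^2\to L^2$ bound at time $t-1/2$ with the $L^2\to L^p$ smoothing of $e^{\Delta/2}$ applied to $\nabla u$; for $0<t\le 1$ one combines (ii) with Poincar\'e's inequality applied to $w-\overline w$. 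Finally, (iv) is obtained by duality: $e^{t\Delta}(\nabla\cdot w)$ is defined by $\langle e^{t\Delta}(\nabla\cdot w),\varphi\rangle=-\langle w,\nabla e^{t\Delta}\varphi\rangle$, so (iv) with exponents $(p,q)$ reduces directly to (ii) with dual exponents $(q',p')$, and the stated polynomial and exponential prefactors match upon taking the supremum over $\varphi\in L^{p'}(\Omega)$.
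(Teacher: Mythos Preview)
The paper does not actually prove this lemma; it merely recalls it with a citation to \cite{Cao,Win10}. So there is no ``paper's approach'' to compare against, and your sketch must be judged on its own merits. Parts (i), (ii) and (iv) are handled in a standard way and are fine (the duality reduction of (iv) to (ii) is correct once you note that $\nabla e^{t\Delta}\varphi=\nabla e^{t\Delta}(\varphi-\overline{\varphi})$, so the exponential factor survives).

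The genuine gap is in (iii). Your short-time argument ``combine (ii) with Poincar\'e applied to $w-\overline{w}$'' yields
\[
\|\nabla e^{t\Delta}w\|_{L^p}\le C\,t^{-\frac12-\frac{d}{2}(\frac1q-\frac1p)}\|w-\overline{w}\|_{L^q}\le C\,t^{-\frac12-\frac{d}{2}(\frac1q-\frac1p)}\|\nabla w\|_{L^q},
\]
which carries an extra factor $t^{-1/2}$ compared with the claimed bound $t^{-\frac{d}{2}(\frac1q-\frac1p)}$ in (iii). This is exactly the improvement (iii) is meant to record over (ii), and your route does not capture it. Your long-time step is then circular: to pass from the $L^2\!\to\!L^2$ gradient bound to $L^q\!\to\!L^p$ you invoke ``the $L^2\!\to\!L^p$ smoothing of $e^{\Delta/2}$ applied to $\nabla u$'', but $e^{t\Delta}$ and $\nabla$ do not commute on a bounded Neumann domain, so what you really need is $\|\nabla e^{\Delta/2}\tilde w\|_{L^p}\le C\|\nabla \tilde w\|_{L^2}$, which is precisely (iii) at $q=2$, $t=\tfrac12$ --- the very estimate you have not yet established.

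What is actually required for (iii) is an argument that genuinely uses the gradient structure of the datum rather than only its $L^q$ size. One standard route (used in the references the paper cites) is to first prove the diagonal case $\|\nabla e^{t\Delta}w\|_{L^p}\le C\|\nabla w\|_{L^p}$ uniformly for $0<t\le 1$ --- equivalently, boundedness of the Neumann heat semigroup on $W^{1,p}$, which follows from analyticity in $L^p$ together with the characterisation $D_p((-\Delta_N)^{1/2})\hookrightarrow W^{1,p}$ (or, for $p=2$, directly from spectral calculus as you did). The off-diagonal short-time case $q<p$ then follows by writing $\nabla e^{t\Delta}=\nabla e^{(t/2)\Delta}\circ e^{(t/2)\Delta}$ and applying the diagonal $W^{1,p}$ bound to the first factor and the $L^q\!\to\!L^p$ bound from (i) to the second, but at the level of $\nabla$, i.e.\ controlling $\|\nabla e^{(t/2)\Delta}\tilde w\|_{L^p}$ by $\|\nabla\tilde w\|_{L^q}$ via an interpolation between the $p=q$ case and the $L^2$ spectral bound. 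Without some input of this type your argument for (iii) does not close.
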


\section{Decay Properties of the Linearized Systems}
In this section, we try to establish $L^p-L^q$ estimates for the associated semigroups of linearized system \eqref{chemo2l}.

\subsection{The parabolic--elliptic case: $\gamma=0$}

In this part, we consider the simplified parabolic--elliptic case $\gamma=0$. Now, the linearized problem reads
\begin{equation}
\begin{cases}\label{chemo4}
		\u_t-\Delta \u+\M\Delta \v=0 \qquad &x\in\Omega,\;t>0\\
	-\Delta \v+\v=\u \qquad &x\in\Omega,\;t>0\\
	\frac{\partial \u}{\partial\nu}=\frac{\partial \v}{\partial\nu}=0,\qquad &x\in\partial\Omega,\;t>0\\
		\u(x,0)=u_0(x)\qquad & x\in\Omega
\end{cases}
\end{equation}or equivalently,
\begin{equation}\begin{cases}
				\u_t-\Delta \u+\M\Delta (I-\Delta)^{-1}\u=0 \qquad &x\in\Omega,\;t>0\\
	\frac{\partial \u}{\partial\nu}=0,\qquad &x\in\partial\Omega,\;t>0\\
		\u(x,0)=u_0(x)\qquad & x\in\Omega
\end{cases}
\end{equation}since $I-\Delta$ is invertible on $L^p_0(\Omega)$ for any $p>1.$

Denote $\mathcal{L}=\Delta-\M\Delta (I-\Delta)^{-1}$ with domain $D(\mathcal{L})=W^{2,p}_N(\Omega)\cap L^p_0(\Omega)$, where 
\begin{equation}
	W^{2,p}_N(\Omega):=\{w\in W^{2,p}(\Omega),\;\frac{\partial w}{\partial\nu}=0\;\text{on}\;\partial\Omega.\}\nonumber
\end{equation} Denoting $B=\Delta(I-\Delta)^{-1}$, then one easily checks that $B$ is a bounded operator on $L^p_0(\Omega).$ Indeed,  since $\Delta$ generates a $C_0-$semigroup of contraction on $L^p_0(\Omega)$, we infer by the Hill-Yosida theorem that $\|(I-\Delta)^{-1}\|_{L^p}\leq 1$. It follows that 
\begin{equation}
	\|\Delta(I-\Delta)^{-1}\|_{L^p}=\|I-(I-\Delta)^{-1}\|_{L^p}\leq 2.
\end{equation}
Then by Lemma \ref{pertur1}, $\mathcal{L}$ is also a sectorial operator and thus generates an analytic semigroup on $L^p_0(\Omega)$, which is denoted by $e^{t\mathcal{L}}$ in the sequel. 
 
 Next, we show that the following exponentially decay estimates of $e^{t\mathcal{L}}$ in $L^2_0(\Omega).$

 \begin{lemma}\label{exdlem0}
 	Suppose $0< \M<1+\lambda_1$. Then for any $u_0\in L^2_0(\Omega),$ there holds
 	\begin{equation}\label{expdecay0}
 		\|e^{t\mathcal{L}}u_0\|\leq e^{-\mu_0t}\|u_0\|,\qquad\forall t\geq0
 	\end{equation} where $\mu_0\triangleq\lambda_1(1-\frac{\M}{1+\lambda_1}).$
 \end{lemma}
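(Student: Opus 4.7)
The plan is to diagonalize $\mathcal{L}$ against the eigenbasis of the Neumann Laplacian on $L^2_0(\Omega)$. Because $-\Delta$ with Neumann boundary conditions is self-adjoint with compact resolvent, there is a complete orthonormal system $\{\phi_k\}_{k\geq 0}$ of eigenfunctions in $L^2(\Omega)$ with eigenvalues $0 = \lambda_0 < \lambda_1 \leq \lambda_2 \leq \cdots$, and $\{\phi_k\}_{k\geq 1}$ is an orthonormal basis of $L^2_0(\Omega)$. Since $\mathcal{L} = \Delta - \mathcal{M}\Delta(I-\Delta)^{-1}$ is a functional-calculus expression in $-\Delta$, each $\phi_k$ remains an eigenfunction of $\mathcal{L}$, and the task reduces to locating the spectrum of $-\mathcal{L}$ on $L^2_0(\Omega)$.

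First I would compute, for $k \geq 1$,
\begin{equation*}
\mathcal{L}\phi_k = -\lambda_k \phi_k + \frac{\mathcal{M}\lambda_k}{1+\lambda_k}\phi_k = -\nu_k \phi_k, \qquad \nu_k := \frac{\lambda_k(1+\lambda_k - \mathcal{M})}{1+\lambda_k},
\end{equation*}
using $(I-\Delta)^{-1}\phi_k = (1+\lambda_k)^{-1}\phi_k$. Expanding $u_0 = \sum_{k\geq 1}a_k\phi_k$ and invoking Parseval gives $\|e^{t\mathcal{L}}u_0\|^2 = \sum_k a_k^2 e^{-2\nu_k t}$, so the result reduces to the scalar bound $\nu_k \geq \mu_0$ for all $k \geq 1$. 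Setting $g(\lambda) := \lambda(1+\lambda - \mathcal{M})/(1+\lambda)$, we have $\nu_k = g(\lambda_k)$ and $\mu_0 = g(\lambda_1)$, and a direct computation yields $g'(\lambda) = 1 - \mathcal{M}/(1+\lambda)^2$. The hypothesis $\mathcal{M} < 1+\lambda_1 \leq (1+\lambda_1)^2$ forces $g'(\lambda) > 0$ on $[\lambda_1, \infty)$, so $g$ is strictly increasing along the spectrum, whence $\nu_k \geq g(\lambda_1) = \mu_0$ and
\begin{equation*}
\|e^{t\mathcal{L}}u_0\|^2 \leq e^{-2\mu_0 t}\sum_{k \geq 1}a_k^2 = e^{-2\mu_0 t}\|u_0\|^2,
\end{equation*}
which is the claim.

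The proof is essentially a one-line observation once the spectral picture is in place; the one step that deserves attention is the monotonicity, because $g$ has a critical point at $\lambda^\star = \sqrt{\mathcal{M}}-1$ and is not globally increasing on $(0,\infty)$. The assumption $\mathcal{M} < 1+\lambda_1$ is precisely what places $\lambda^\star < \lambda_1$, so the spectrum lies entirely on the increasing branch of $g$, and this also explains why the borderline appears exactly at $1+\lambda_1$. If one prefers to bypass spectral theory, the same conclusion can be reached by multiplying the first equation of \eqref{chemo4} by $\tilde{u}$, using $-\Delta\tilde{v} + \tilde{v} = \tilde{u}$ to derive the identity $\int \nabla\tilde{u}\cdot\nabla\tilde{v}\,dx = \|\tilde{u}\|^2 - \|\nabla\tilde{v}\|^2 - \|\tilde{v}\|^2$, and then applying the Poincaré inequality \eqref{poin} on $L^2_0(\Omega)$; the resulting differential inequality $\tfrac{d}{dt}\|\tilde{u}\|^2 + 2\mu_0\|\tilde{u}\|^2 \leq 0$ is algebraically equivalent to the pointwise bound $\nu_k \geq \mu_0$ above.
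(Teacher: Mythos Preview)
Your proof is correct and takes a genuinely different route from the paper. The paper argues by energy estimates: it multiplies the elliptic equation $-\Delta\tilde v+\tilde v=\tilde u$ by $-\Delta\tilde v$ and uses Poincar\'e on $\nabla\tilde v$ to obtain $\|\nabla\tilde v\|\le\frac{1}{1+\lambda_1}\|\nabla\tilde u\|$, then multiplies the evolution equation by $\tilde u$ and applies Poincar\'e again to close the differential inequality $\tfrac12\frac{d}{dt}\|\tilde u\|^2+\mu_0\|\tilde u\|^2\le 0$. Your approach instead diagonalizes $\mathcal L$ in the Neumann eigenbasis, identifies the exact eigenvalues $\nu_k=g(\lambda_k)$ with $g(\lambda)=\lambda(1+\lambda-\M)/(1+\lambda)$, and reduces the estimate to the monotonicity of $g$ on $[\lambda_1,\infty)$. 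The spectral route is sharper in that it pins down the full spectrum of $-\mathcal L$ and makes transparent why the threshold $\M=1+\lambda_1$ appears (it is where $\nu_1$ changes sign, while your observation about the critical point $\lambda^\star=\sqrt{\M}-1$ explains why no smaller $\nu_k$ can hide higher up). The paper's energy argument, on the other hand, is the one that generalizes: the same multiplier strategy is what the paper uses for the genuinely coupled parabolic--parabolic case (Lemma~\ref{lmexdecaypp}), where $\mathcal A$ is not normal and a clean spectral-calculus argument is unavailable. Your closing remark sketching an energy variant is in the right spirit but not quite the paper's computation; the paper does not use the identity $\int\nabla\tilde u\cdot\nabla\tilde v=\|\tilde u\|^2-\|\nabla\tilde v\|^2-\|\tilde v\|^2$ directly, but rather the intermediate bound $\|\nabla\tilde v\|\le(1+\lambda_1)^{-1}\|\nabla\tilde u\|$.
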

 \begin{proof}We only perform the formal energy estimates here which could be easily justified by density argument. First, we note that 
\begin{equation}\int_\Omega \u dx=\int_\Omega \v dx=0.\non
\end{equation}
Now, multiplying the second equation of \eqref{chemo4} by $-\Delta \v$ and integrating by parts, we get
 \begin{align}
 	\|\Delta \v\|^2+\|\nabla \v\|^2=\int_\Omega\nabla\u\cdot\nabla \v dx\leq \|\nabla \u\|\|\nabla\v\|.
 \end{align} Then observing  that by Poincar\'e's inequality \eqref{poin},
 \begin{equation}
 	\|\Delta\v\|^2\geq\lambda_1\|\nabla \v\|^2,
 \end{equation}we  infer that 
 \begin{equation}
 	\|\nabla\v\|\leq\frac{1}{1+\lambda_1}\|\nabla \u\|
 \end{equation}and hence
 	 \begin{equation}
 	\left|\int_\Omega\Delta \v \u dx\right|\leq \frac{1}{1+\lambda_1}\|\nabla \u\|^2.
 \end{equation}	
On the other hand, a multiplication of  the first equation by $\u$ and an integration over $\Omega$ yields that
 \begin{align}
 	\frac{1}{2}\frac{d}{dt}\|\u\|^2+\|\nabla \u\|^2=&-\M\int_\Omega\Delta \v  \u dx\nonumber\\
 	\leq& \frac{\M}{1+\lambda_1}\|\nabla \u\|^2.
 \end{align}	
 	Therefore, by Poincar\'e's inequality again, we have
 	\begin{equation}
 		\frac12\frac{d}{dt}\|\u\|^2+\lambda_1(1-\frac{\M}{1+\lambda_1})\|\u\|^2\leq 0
 	\end{equation} which yields
 	\begin{equation}\label{exde1}
 		\|\u(t)\|^2\leq \|u_{0}\|^2e^{-2\mu_0t},\quad\forall t\geq0.
 	\end{equation}with $\mu_0=\lambda_1(1-\frac{\M}{1+\lambda_1})$. This completes the proof. \end{proof}

Thanks to the exponentially decay property in $L^2_0(\Omega)$, we are able to prove the following
\begin{lemma}\label{lmdecay}
	Suppose $0<\M<1+\lambda_1$. For any $p>1$, $1\leq q\leq p\leq\infty$  and $u_0\in L^p_0(\Omega)$, there holds
\begin{equation}\label{decayest}
	\|e^{t\mathcal{L}}u_0\|_{L^p(\Omega)}\leq c_1(1+t^{-\frac{d}{2}(\frac1q-\frac1p)})e^{-\mu_0t}\|u_0\|_{L^q(\Omega)},\qquad\forall t>0 
\end{equation}where $c_1>0$ depends only on $d$ and $\Omega$.
\end{lemma}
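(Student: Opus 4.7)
The strategy is to bootstrap the $L^2$-$L^2$ decay of Lemma \ref{exdlem0} up to the full $L^p$-$L^q$ scale by combining a Duhamel representation of $e^{t\mathcal{L}}$ with the Neumann heat semigroup bounds of Lemma \ref{lmpq}. Setting $\u(t)=e^{t\mathcal{L}}u_0$ and $\v(s)=(I-\Delta)^{-1}\u(s)$, and treating $-\M\Delta(I-\Delta)^{-1}\u$ as a source term against the base operator $\Delta$, the variation of constants formula gives
\begin{equation*}
\u(t) = e^{t\Delta}u_0 - \M\int_0^t e^{(t-s)\Delta}\nabla\cdot\nabla\v(s)\,ds.
\end{equation*}
Writing $\Delta\v$ in divergence form is essential so that Lemma \ref{lmpq}(iv) is available inside the integral. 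Standard $W^{2,r}$ regularity for the Neumann problem $-\Delta\v+\v=\u$ yields $\|\nabla\v\|_{L^r(\Omega)}\leq C_r\|\u\|_{L^r(\Omega)}$ for every $1<r<\infty$, while the energy identity already used in the proof of Lemma \ref{exdlem0} gives the sharper $\|\nabla\v\|_{L^2}\leq\|\u\|_{L^2}$.

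The base step upgrades $L^2$-$L^2$ to $L^2$-$L^p$: substitute $\|\nabla\v(s)\|_{L^2}\leq e^{-\mu_0 s}\|u_0\|_{L^2}$ into the Duhamel formula, control the free term via Lemma \ref{lmpq}(i), and control the integrand via Lemma \ref{lmpq}(iv) with $q=2$. Because $\mu_0<\lambda_1$, Lemma \ref{lmint} collapses the time convolution into a bound of the desired form $C(1+t^{-\frac{d}{2}(\frac{1}{2}-\frac{1}{p})})e^{-\mu_0 t}\|u_0\|_{L^2}$, provided the singularity exponent $\tfrac12+\tfrac{d}{2}(\tfrac12-\tfrac1p)$ stays strictly below $1$, which holds for $p\in[2,\tfrac{2d}{d-2})$ if $d\geq 3$ and any $p\in[2,\infty)$ if $d=2$. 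I then iterate: given an $L^2$-$L^{p_k}$ estimate, I reuse the Duhamel identity with $\|\nabla\v(s)\|_{L^{p_k}}\leq C\|\u(s)\|_{L^{p_k}}$ and Lemma \ref{lmpq}(iv) with $q=p_k$ to extend the Lebesgue exponent to any $p_{k+1}$ satisfying $\tfrac1{p_{k+1}}>\tfrac1{p_k}-\tfrac1d$. Each pass shrinks $\tfrac1p$ by nearly $\tfrac1d$, so finitely many passes reach the whole range $p\in[2,\infty]$; the decay rate stays pinned at $\mu_0$ throughout because $\mu_0<\lambda_1$ makes the $\min(\lambda_1,\mu_0)$ appearing in Lemma \ref{lmint} equal to $\mu_0$ at every step.

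For $1\leq q<2$, I exploit the fact that $\Delta$ and $(I-\Delta)^{-1}$ commute and are each self-adjoint on $L^2_0(\Omega)$, so $\mathcal{L}$ is self-adjoint and the $L^2\to L^{q'}$ bound just established dualizes to an identical $L^q\to L^2$ bound. Splitting $e^{t\mathcal{L}}=e^{(t/2)\mathcal{L}}\circ e^{(t/2)\mathcal{L}}$ and composing the $L^q\to L^2$ estimate at time $t/2$ with the $L^2\to L^p$ estimate at time $t/2$ produces \eqref{decayest}: for small $t$ the two singular prefactors multiply as $(t/2)^{-\frac{d}{2}(\frac{1}{q}-\frac{1}{2})}(t/2)^{-\frac{d}{2}(\frac{1}{2}-\frac{1}{p})}\sim t^{-\frac{d}{2}(\frac{1}{q}-\frac{1}{p})}$, while for all $t>0$ the two exponentials combine to $e^{-\mu_0 t}$.

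The principal obstacle is that a single Duhamel pass cannot bridge $L^2\to L^\infty$, because $\tfrac12+\tfrac{d}{4}\geq 1$ whenever $d\geq 2$; the finite induction on the Lebesgue exponent in the second paragraph is therefore unavoidable, and its success depends critically on the strict inequality $\mu_0<\lambda_1$, which guarantees that the exponential rate is not degraded at any iteration step.
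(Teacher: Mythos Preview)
Your overall strategy—Duhamel against $e^{t\Delta}$, bootstrap from the $L^2$ decay of Lemma~\ref{exdlem0}, then dualize via self-adjointness of $\mathcal{L}$ and compose—is reasonable and the duality idea is a nice touch absent from the paper. However, the iteration step as you describe it has a genuine gap in high dimensions. When you ``reuse the Duhamel identity'' and feed in the $L^2\to L^{p_k}$ bound $\|\u(s)\|_{L^{p_k}}\le C(1+s^{-\beta_k})e^{-\mu_0 s}\|u_0\|_{L^2}$ with $\beta_k=\tfrac{d}{2}(\tfrac12-\tfrac1{p_k})$, the convolution integral in Lemma~\ref{lmint} requires $\beta_k<1$, i.e.\ $p_k<\tfrac{2d}{d-4}$. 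For $d\ge 6$ this ceiling is at most $d$, while one more pass (constrained by $\alpha<1$, i.e.\ $\tfrac1{p_k}-\tfrac1{p_{k+1}}<\tfrac1d$) can only reach $p_{k+1}$ with $\tfrac1{p_{k+1}}>\tfrac{d-6}{2d}$. For $d=6$ you can push to any finite $p$ but never $p=\infty$; for $d\ge 7$ you cannot even reach all finite $p$. So the assertion ``finitely many passes reach the whole range $p\in[2,\infty]$'' is false once $d\ge 6$.

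The paper avoids this by two devices you did not use. First, it exploits that $\Delta(I-\Delta)^{-1}$ is \emph{bounded} on $L^p$ (norm $\le 2$), so in the Duhamel formula one estimates $\|e^{(t-s)\Delta}\Delta\v(s)\|_{L^p}\le k_1 e^{-\lambda_1(t-s)}\|\Delta\v(s)\|_{L^p}\le 2k_1 e^{-\lambda_1(t-s)}\|\u(s)\|_{L^p}$ with \emph{no} $(t-s)^{-1/2}$ loss; writing $\Delta\v=\nabla\cdot\nabla\v$ and invoking Lemma~\ref{lmpq}(iv) costs you an unnecessary half-derivative. Second, the paper separates $t\le 1$ from $t\ge 1$: on $t\le 1$ a direct Gronwall argument (with the bounded-perturbation estimate just mentioned) gives the $L^q\to L^p$ smoothing for a single step $\tfrac1q-\tfrac1p<\tfrac1d$, and the full range is then reached by iterating the \emph{semigroup property} $e^{t\mathcal{L}}=(e^{(t/N)\mathcal{L}})^N$ rather than by re-entering Duhamel. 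This iteration composes $L^{q_{k+1}}\to L^{q_k}$ estimates and therefore never sees the troublesome $s^{-\beta_k}$ singularity. The large-time bound then follows by sandwiching the $L^2$ decay between two short-time smoothing steps—essentially your composition idea, but applied after the short-time $L^q\to L^p$ estimate is already in hand for all $q\le p$.
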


\begin{proof} The proof consists of two parts. First, we derive for $t\leq1$, there holds
\begin{equation}\nonumber
	\|e^{t\mathcal{L}}u_0\|_{L^p(\Omega)}\leq ct^{-\frac{d}{2}(\frac1q-\frac1p)})\|u_0\|_{L^q(\Omega)}. 
\end{equation} Then  for $t\geq1$, arguing in the same way as done in \cite{Win10}, invoking Lemma \ref{exdlem0}, we prove that
\begin{equation}\nonumber
		\|e^{t\mathcal{L}}u_0\|_{L^p(\Omega)}\leq ce^{-\mu_0t}\|u_0\|_{L^q(\Omega)}.
\end{equation}Then our assertion follows by combining the above two relations.

 By the variation-of-constants formula, we observe
\begin{equation}
	\u(t)=e^{t\Delta}u_0-\M\int_0^te^{(t-s)\Delta}\Delta \v(s)ds.
\end{equation}
Denoting $\u(t)=e^{t\mathcal{L}}u_0$, it follows from above and Lemma \ref{lmpq} that for $t\leq1,$

	\begin{align}
	\|e^{t\mathcal{L}}u_0\|_{L^p(\Omega)}=&\bigg{\|}e^{t\Delta}u_0-\M\int_0^t e^{(t-s)\Delta}\Delta \v(s)ds\bigg{\|}_{L^p(\Omega)}\nonumber\\
\leq&\|e^{t\Delta}u_0\|_{L^p(\Omega)}+k_1\M\int_0^t e^{-\lambda_1(t-s)} \|\Delta \v(s)\|_{L^p(\Omega)}ds\non\\
\leq &k_1t^{-\frac{d}{2}(\frac{1}{q}-\frac{1}{p})}\|u_0\|_{L^q(\Omega)}+ 2k_1\M\int_0^t   e^{-\lambda_1(t-s)} \|\u(s)\|_{L^p(\Omega)}ds
	\end{align}since $\Delta \v=\Delta(I-\Delta)^{-1}\u$ and $\|\Delta(I-\Delta)^{-1}\|_{L^p(\Omega)}\leq2$ for any $1<p<\infty$ which also holds true for $p=\infty$. Indeed, noting that $\|\v\|_{L^\infty(\Omega)}\leq \|\u\|_{L^\infty(\Omega)}$ by standard energy estimates, we infer that $\|\Delta \v\|_{L^\infty(\Omega)}\leq \|\u\|_{L^\infty(\Omega)}+\|\v\|_{L^\infty(\Omega)}\leq 2\|\u\|_{L^\infty(\Omega)}$.	
	 
Let $y(t)=t^{\frac{d}{2}(\frac1q-\frac1p)}\|\u(t)\|_{L^p(\Omega)}$. Then we derive that for $t\leq1$
\begin{equation}
	y(t)\leq k_1\|u_0\|_{L^q(\Omega)}+2k_1\M\int_0^te^{-\lambda_1(t-s)}s^{-\frac{d}{2}(\frac1q-\frac1p)}y(s)ds.
\end{equation}An application of Gronwall's inequality yields that
\begin{align}
		y(t)\leq& k_1\|u_0\|_{L^q(\Omega)}\exp\{2k_1\M\int_0^te^{-\lambda_1(t-s)}s^{-\frac{d}{2}(\frac1q-\frac1p)}ds\}\nonumber\\
		\leq&k_1\|u_0\|_{L^q(\Omega)}\exp\{\frac{2k_1\M}{1-\frac{d}{2}(\frac1q-\frac1p)} t^{1-\frac{d}{2}(\frac1q-\frac1p)}\}\nonumber\\
		\leq &k_1\|u_0\|_{L^q(\Omega)}e^{4k_1\M}\end{align}provided that $\frac1d>\frac1q-\frac1p$.
Therefore, for $t\leq1$ and $\frac1d>\frac1q-\frac1p$,
\begin{equation}
	\|e^{t\mathcal{L}}u_0\|_{L^p(\Omega)}\leq c_2t^{-\frac{d}{2}(\frac{1}{q}-\frac{1}{p})}\|u_0\|_{L^q(\Omega)}
\end{equation}where $c_2=k_1e^{4k_1\M}<k_1e^{4k_1(1+\lambda_1)}.$ 

Similarly, if $\frac1q-\frac1p<\frac{N}{d}$ for some $2\leq N\in\mathbb{N}$, we may find $\{q_k\}_{k=0}^{N-1}$ between $q=q_N$ and $p=q_0$ such that $\frac1d>\frac1{q_{k+1}}-\frac1{q_{k}}$ and hence
\begin{align}
	\|e^{t\mathcal{L}}u_0\|_{L^p(\Omega)}\leq &c_2(\frac{t}{N})^{-\frac{d}{2}(\frac1{q_1}-\frac1p)}\|e^{\frac{N-1}{N}t\mathcal{L}}u_0\|_{L^{q_1}(\Omega)}\nonumber\\
	\leq& c_2^2(\frac{t}{N})^{-\frac{d}{2}(\frac1{q_1}-\frac1p)}(\frac{t}{N})^{-\frac{d}{2}(\frac1{q_2}-\frac1{q_1})}\|e^{\frac{N-2}{N}t\mathcal{L}}u_0\|_{L^{q_2}(\Omega)}\nonumber\\
	\leq&...\nonumber\\
	\leq& c_2^N(\frac{t}{N})^{-\frac{d}{2}(\frac1{q_1}-\frac1p)}...(\frac{t}{N})^{-\frac{d}{2}(\frac1{q}-\frac1{q_{N-1}})}\|u_0\|_{L^{q}(\Omega)}\nonumber\\
	= &c_2^N (\frac{t}{N})^{-\frac{d}{2}(\frac1{q}-\frac1p)}\|u_0\|_{L^q(\Omega)}\nonumber\\
	\leq& c_3t^{-\frac{d}{2}(\frac1{q}-\frac1p)}\|u_0\|_{L^q(\Omega)}\label{pq0}
\end{align}
where $c_3=(c_2\sqrt{N})^N$ since $\frac{d}{2}(\frac{1}{q}-\frac{1}{p})<\frac N2$. 

Obviously, $N\leq d+1$. Thus, there is $c_4>0$ depends on $d$ and $\Omega$ only such that for all $p>1$ and $1\leq q\leq p\leq \infty$, there holds for $t\leq1$
\begin{equation}\label{pq0a}
		\|e^{t\mathcal{L}}u_0\|_{L^p(\Omega)}\leq c_4t^{-\frac{d}{2}(\frac1{q}-\frac1p)}\|u_0\|_{L^{q}(\Omega)}.
\end{equation}

Now, for $t\geq1$ and $p\geq2$, we derive by \eqref{pq0a} and Lemma \ref{exdlem0} that
\begin{align}
	\|e^{t\mathcal{L}}u_0\|_{L^p(\Omega)}\leq& c_4 2^{\frac d2(\frac12-\frac1p)}\|e^{(t-\frac12)\mathcal{L}}u_0\|_{L^2(\Omega)}\non\\
	\leq& c_42^{\frac d2(\frac12-\frac1p)}e^{-\mu_0(t-1)}\|e^{\frac12\L}u_0\|_{L^2(\Omega)}\non\\
	\leq & c_4^22^{\frac d2(\frac12-\frac1p)}e^{-\mu_0(t-1)}\times\max\{2^{\frac d2(\frac12-\frac1p)},|\Omega|^{\frac12-\frac1q}\}\|u_0\|_{L^q(\Omega)}\label{pq1}\end{align}
	and for $p<2,$ we have
\begin{align}
	\|e^{t\L}u_0\|_{L^p(\Omega)}\leq& |\Omega|^{\frac1p-\frac12}\|e^{t\L}u_0\|_{L^2(\Omega)}\nonumber\\
	\leq&|\Omega|^{\frac1p-\frac12}e^{-\mu_0(t-\frac12)}\|e^{\frac12\L}u_0\|_{L^2(\Omega)}\nonumber\\
	\leq&c_4 2^{\frac d2(\frac12-\frac1p)}|\Omega|^{\frac1p-\frac12}e^{-\mu_0(t-\frac12)}\|u_0\|_{L^q(\Omega)}\label{pq2}.
\end{align}	
Finally, a combination of 	\eqref{pq0a}, \eqref{pq1} and \eqref{pq2} completes the proof.\end{proof}

With minor modification and in the same manner as done in proof of Lemma \ref{lmdecay}, we can prove the following result for the special case when $u_0=\nabla \cdot w$.
\begin{lemma}\label{cor2}Suppose $0<\M<1+\lambda_1$. For any $1< q\leq p\leq\infty$  and $u_0=\nabla \cdot w$, there holds
\begin{equation}\label{decayest0}
	\|e^{t\mathcal{L}}u_0\|_{L^p(\Omega)}\leq c_5(1+t^{-\frac12-\frac{d}{2}(\frac1q-\frac1p)})e^{-\mu_0t}\|w\|_{L^q(\Omega)} 
\end{equation}where $c_5$ depends only on $d$ and $\Omega$.

\end{lemma}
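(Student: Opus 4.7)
The plan is to mimic the proof of Lemma \ref{lmdecay} step by step, the only substantive change being that the free term $e^{t\Delta}u_0=e^{t\Delta}(\nabla\cdot w)$ is now estimated by part (iv) of Lemma \ref{lmpq} rather than part (i); this is precisely what generates the extra $t^{-1/2}$ in the decay rate.

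For $0<t\le 1$, the variation-of-constants identity $\tilde u(t)=e^{t\Delta}(\nabla\cdot w)-\M\int_0^t e^{(t-s)\Delta}\Delta\tilde v(s)\,ds$, combined with part (iv) of Lemma \ref{lmpq} applied to the free term and with part (i) plus the bound $\|\Delta(I-\Delta)^{-1}\|_{L^p\to L^p}\le 2$ applied to the integrand (exactly as in the proof of Lemma \ref{lmdecay}), yields
\begin{equation*}
\|\tilde u(t)\|_{L^p}\le k_4 t^{-\alpha}\|w\|_{L^q}+2k_1\M\int_0^t e^{-\lambda_1(t-s)}\|\tilde u(s)\|_{L^p}\,ds,\qquad \alpha:=\tfrac12+\tfrac{d}{2}\bigl(\tfrac1q-\tfrac1p\bigr).
\end{equation*}
When $\alpha<1$, that is, $\tfrac1q-\tfrac1p<\tfrac1d$, applying Gronwall to $y(t)=t^\alpha\|\tilde u(t)\|_{L^p}$ on $(0,1]$ yields $\|\tilde u(t)\|_{L^p}\le c\,t^{-\alpha}\|w\|_{L^q}$. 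For the remaining cases I use the semigroup identity $e^{t\L}(\nabla\cdot w)=e^{(t/2)\L}\bigl(e^{(t/2)\L}(\nabla\cdot w)\bigr)$, choose an intermediate exponent $q_1\in(q,p]$ satisfying $\tfrac1q-\tfrac1{q_1}<\tfrac1d$, estimate the inner factor in $L^{q_1}$ via the Gronwall step just described, and map $L^{q_1}\to L^p$ by the short-time bound of Lemma \ref{lmdecay}; the fractional powers of $t/2$ telescope to $-\tfrac12-\tfrac{d}{2}(\tfrac1q-\tfrac1p)$.

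For $t\ge 1$ I split $e^{t\L}(\nabla\cdot w)=e^{(t-1/2)\L}\bigl(e^{(1/2)\L}(\nabla\cdot w)\bigr)$, use the short-time bound just proved (composed with the H\"older inclusion $L^q\hookrightarrow L^2$ when $q>2$) to obtain $\|e^{(1/2)\L}(\nabla\cdot w)\|_{L^2}\le c\|w\|_{L^q}$, and apply Lemma \ref{lmdecay} from $L^2$ to $L^p$ to get $\|e^{(t-1/2)\L}f\|_{L^p}\le c\,e^{-\mu_0 t}\|f\|_{L^2}$, the polynomial factor in Lemma \ref{lmdecay} remaining bounded because $t-\tfrac12\ge\tfrac12$. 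Concatenating with the short-time bound produces the stated two-term estimate. The main obstacle is the careful bookkeeping in the iteration for $\tfrac1q-\tfrac1p\ge\tfrac1d$: only the first iteration should carry the ``$\nabla\cdot$'' structure, since Lemma \ref{lmdecay} is formulated for plain initial data, and one has to verify that the resulting powers of $t/2$ add up correctly with constants depending only on $d$ and $\Omega$; this is essentially a repetition of the scheme already used for Lemma \ref{lmdecay}, and no new analytic idea is required.
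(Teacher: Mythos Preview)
Your proposal is correct and matches the paper's own approach, which is simply stated as ``with minor modification and in the same manner as done in proof of Lemma \ref{lmdecay}'' without further detail. Your only deviation is in the iteration for large gaps $\tfrac1q-\tfrac1p\ge\tfrac1d$: rather than repeating the new Gronwall step several times (as the paper does in Lemma \ref{lmdecay}), you use one $\nabla\cdot$-step to reach an intermediate $L^{q_1}$ and then invoke the already-established Lemma \ref{lmdecay} for the remaining $L^{q_1}\to L^p$ passage---this is a harmless and slightly cleaner variant, since Lemma \ref{lmdecay} already absorbs the iteration with constants depending only on $d$ and $\Omega$. One small point worth making explicit: to keep the Gronwall constant $\exp\{c\M/(1-\alpha)\}$ uniform in $q,p$, you should fix the first gap at, say, $\tfrac1q-\tfrac1{q_1}\le\tfrac1{2d}$ so that $\alpha\le\tfrac34$; this is implicit in your scheme but easy to overlook.
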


\subsection{The fully parabolic case: $\gamma=1$}
In this part, we consider the case $\gamma=1$. Similar as before, denote $(\tilde{u},\tilde{v})$ the solution to corresponding linearized system to \eqref{chemo2}. Then, $(\tilde{u},\tilde{v})$ satisfies
\begin{equation}
	\begin{cases}\label{chemo3b}
		\u_t-\Delta \u+\M\Delta \v=0,\qquad &x\in\Omega,\;t>0\\
		\v_t-\Delta \v+\v= \u,\qquad &x\in\Omega,\;t>0\\
		\frac{\partial \u}{\partial\nu}=\frac{\partial \v}{\partial\nu}=0,\qquad &x\in\partial\Omega,\;t>0\\
		\u(x,0)=u_0(x),\;\;\v(x,0)= v_0(x)\qquad & x\in\Omega.
	\end{cases}
\end{equation}
Denote $\Delta$ the usual Laplacian operator with homogeneous Neumann boundary condition. Since $-\Delta$ is analytic on $L^p_0(\Omega)$ with domain $D_p(\Delta)=W_N^{2,p}(\Omega)\cap L^p_0(\Omega)$,  we can define the power $(-\Delta)^s$ of $-\Delta$ for any $s\in\mathbb{R}$ and we denote the domain of $(-\Delta)^s$ in $L^p_0(\Omega)$ by $D_p((-\Delta)^s).$

Let $\mathcal{X}=L^p_0(\Omega)\times D_p((-\Delta)^\frac12)$ for $1< p<\infty$ with norm
\begin{equation}
	\|(u,v)\|_{\mathcal{X}}=\|u\|_{L^p(\Omega)}+\|\nabla v\|_{L^p(\Omega)}
\end{equation}
and define
\begin{align}
\mathcal{A}=\left(\begin{matrix}\Delta &-\M\Delta\\
1&\Delta -1\end{matrix}\right)
\end{align}with domain $D(\mathcal{A})=D_p(\Delta)\times  D_p((-\Delta)^\frac32)$.
We observe that
\begin{align}
	\mathcal{A}=\left(\begin{matrix}\Delta &0\\
0&\Delta -1\end{matrix}\right)+\left(\begin{matrix}0 &-\M\Delta\\
1&0\end{matrix}\right)
\triangleq \Lambda+\mathcal{U}
\end{align}
where $\Lambda$ is a sectorial operator on $\mathcal{X}$. Moreover, one easily verifies that 
 $D(\mathcal{A})=D(\Lambda)\subset D(\mathcal{U})=  D_p((-\Delta)^\frac12)\times D_p(\Delta)$ and 
for any $(u,v)\in D(\Lambda)$, by interpolation, there holds
\begin{equation}
	\M\|\Delta v\|_{L^p}+\|\nabla u\|_{L^p}\leq \varepsilon\bigg(\|\Delta u\|_{L^p}+\|\nabla \Delta v-\nabla v\|_{L^p}\bigg)+K_{\varepsilon}(\|u\|_{L^p}+\|\nabla v\|_{L^p}).
\end{equation} Then Lemma \ref{pertur1} indicates that $\mathcal{A}$ is a sectorial operator as well. For the sake of convenience, we denote 
\begin{align}
	\left(\begin{matrix}\u(t)\\
\v(t)\end{matrix}\right)=e^{t\mathcal{A}}\left(\begin{matrix}u_0\\
v_0\end{matrix}\right)\triangleq\left(\begin{matrix}\Phi_1^t(u_0,v_0)\\
\Phi_2^t(u_0,v_0)\end{matrix}\right).
\end{align}

Now, similar as before, we first prove the exponentially decay property for the semigroup $e^{t\mathcal{A}}$ in the Hilbert space $L^2_0\times (H^1\cap  L^2_0).$

\begin{lemma}\label{lmexdecaypp}
	Assume $0<\M<1+\lambda_1$. For any given initial data $u_0\in L^2_0(\Omega)$ and $v_0\in H^1(\Omega)\cap L^2_0(\Omega)$, the solution of \eqref{chemo3b} satisfies the following exponentially decay estimate
	\begin{equation}
		\|\u\|^2+\M\|\nabla \v\|^2\leq e^{-2\mu_1t}(\|u_0\|^2+\M\|\nabla v_0\|^2)\qquad\text{for all}\;t\geq0
	\end{equation}where $\mu_1\triangleq\lambda_1-\frac12\bigg(\sqrt{4\lambda_1\M+1}-1\bigg)>0.$
\end{lemma}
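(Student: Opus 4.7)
The plan is to derive a differential inequality for the weighted energy functional
$$E(t) = \|\u(t)\|^2 + \M\|\nabla \v(t)\|^2,$$
whose weight $\M$ on $\|\nabla\v\|^2$ is specifically tuned so that the two cross--diffusion contributions produced by the coupling combine into a single symmetric bilinear term. The mean-zero condition, required to invoke Poincar\'e's inequality, is propagated in time: testing the second equation against $1$ and using $\int u_0=\int v_0=0$ shows $\int_\Omega \v(t)\,dx\equiv0$, so both $\u(t)$ and $\v(t)$ stay in $L^2_0(\Omega)$ for all $t\geq0$.

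First, I would multiply the first equation of \eqref{chemo3b} by $\u$ and integrate by parts using the Neumann boundary condition to obtain
$$\frac{1}{2}\frac{d}{dt}\|\u\|^2 + \|\nabla \u\|^2 = \M\int_\Omega \nabla\u\cdot\nabla\v\,dx.$$
Next, I would multiply the second equation by $-\M\Delta\v$ and integrate by parts to get
$$\frac{\M}{2}\frac{d}{dt}\|\nabla\v\|^2 + \M\|\Delta\v\|^2 + \M\|\nabla\v\|^2 = \M\int_\Omega \nabla\u\cdot\nabla\v\,dx.$$
Summing these two identities yields
$$\frac{1}{2}\frac{d}{dt}E(t) + \|\nabla\u\|^2 + \M\|\Delta\v\|^2 + \M\|\nabla\v\|^2 = 2\M\int_\Omega \nabla\u\cdot\nabla\v\,dx.$$

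The next step is to absorb the right-hand side. I would apply Young's inequality with a free parameter $a>0$, namely $2\M|\int\nabla\u\cdot\nabla\v|\leq a\|\nabla\u\|^2+\frac{\M^2}{a}\|\nabla\v\|^2$, then invoke the Poincar\'e-type bounds $\|\nabla\u\|^2\geq\lambda_1\|\u\|^2$ (immediate from \eqref{poin}) and $\|\Delta\v\|^2\geq\lambda_1\|\nabla\v\|^2$ (obtained from $\|\nabla\v\|^2=-\int\v\Delta\v\leq\|\v\|\|\Delta\v\|$ combined with $\|\v\|\leq\lambda_1^{-1/2}\|\nabla\v\|$). This gives the pointwise inequality
$$\frac{1}{2}\frac{d}{dt}E(t) + (1-a)\lambda_1\|\u\|^2 + \Bigl(\lambda_1+1-\frac{\M}{a}\Bigr)\M\|\nabla\v\|^2 \leq 0.$$
To obtain decay of $E$ at a rate $\mu$, I need $(1-a)\lambda_1\geq\mu$ and $\lambda_1+1-\M/a\geq\mu$. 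The sharpest admissible $\mu$ is obtained by making the two constraints meet, i.e., $\frac{\M}{\lambda_1+1-\mu}=\frac{\lambda_1-\mu}{\lambda_1}$, which reduces to the quadratic $\mu^2-(2\lambda_1+1)\mu+\lambda_1^2+\lambda_1-\M\lambda_1=0$; its smaller root is exactly $\mu_1=\lambda_1-\tfrac12(\sqrt{4\lambda_1\M+1}-1)$. The assumption $0<\M<1+\lambda_1$ is precisely what ensures $\mu_1>0$. Feeding this $\mu_1$ back in produces $\frac{d}{dt}E(t)\leq -2\mu_1 E(t)$, and Gronwall's lemma yields the claimed exponential bound.

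The main (though modest) obstacle is conceptual rather than computational: identifying the correct weighted functional $E$ and the precise Young splitting that simultaneously produces a cancellation of the cross term \emph{and} matches the Poincar\'e constants of $\u$ and $\v$ with the right ratio. A naive energy estimate (for instance weighting $\|\nabla\v\|^2$ by a constant other than $\M$, or using a non-optimal Young inequality) would yield a weaker exponent, not the sharp $\mu_1$ announced in the lemma. Once the weight and the extremizing parameter $a=(\lambda_1-\mu_1)/\lambda_1$ are identified, everything reduces to direct algebra, and the argument is easily justified rigorously by a standard density/smoothing procedure on the initial data.
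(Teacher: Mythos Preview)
Your proposal is correct and follows essentially the same route as the paper: the paper also tests the first equation against $\u$ and the second against $-\M\Delta\v$, combines them into the weighted energy $\|\u\|^2+\M\|\nabla\v\|^2$, applies Young's inequality with a free parameter (called $\delta$ there, your $a$), uses the same Poincar\'e bounds, and then chooses the parameter so that the two coefficients coincide at the value $\mu_1$. Your presentation adds a clean derivation of the optimizing quadratic for $\mu_1$ and an explicit justification of $\|\Delta\v\|^2\geq\lambda_1\|\nabla\v\|^2$, but the argument is the same.
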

\begin{proof}Here we only perform formal energy estimates which could be rigorously justified by density arguments. First, we observe that 
\begin{equation}
	\int_\Omega \u(t)dx=\int_\Omega \v(t)dx=0.\nonumber
\end{equation}
Multiplying the first equation by $\u$ and the second equation by $-\M\Delta \v$, integrating by parts and adding the resultant up, we obtain that
	\begin{align}
		\frac{1}{2}\frac{d}{dt}\bigg(\|\u\|^2+\M\|\nabla \v\|^2\bigg)+\M\bigg(\|\nabla \v\|^2+\|\Delta \v\|^2\bigg)+\|\nabla \u\|^2=&2\M\int_\Omega\nabla \u\cdot \nabla \v dx\nonumber\\
		\leq&\delta\|\nabla \u\|^2+\frac{\M^2}{\delta}\|\nabla \v\|^2.
	\end{align}
In views of Poincar\'e's lemma, we infer that
\begin{align}
\frac{1}{2}\frac{d}{dt}\bigg(\|\u\|^2+\M\|\nabla \v\|^2\bigg)+\M(\lambda_1+1-\frac{\M}{\delta})\|\nabla \v\|^2+\lambda_1(1-\delta)\| \u\|^2\leq0.
\end{align}	
	Picking $\delta=\delta_0\triangleq\frac{1}{2}\bigg(\frac{\sqrt{4\lambda_1\M+1}}{\lambda_1}-\frac{1}{\lambda_1}\bigg)$, we get 
	\begin{equation}
		\lambda_1+1-\frac{\M}{\delta}=\lambda_1(1-\delta)\equiv\mu_1>0
	\end{equation}whenever $\M<1+\lambda_1$ holds. Here, $\delta$ is picked such that  $\lambda_1+1-\frac{\M}{\delta}=\lambda_1(1-\delta)$ holds and attains the maximum at $\delta_0$.  Then the conclusion follows from solving an ordinary differential inequality.	\end{proof}

The next $L^p-L^q$ estimates for $e^{t\mathcal{A}}$ plays a key role in the proof for the case $\gamma=1$, which is established based on Lemma \ref{lmpq} and Lemma \ref{lmexdecaypp} by similar arguments as we done in the previous part. However,  a coupling (linear) system is now under consideration. The  presence of the cross diffusion and  the restrictions on the parameter $q$ in Lemma \ref{lmpq}-(iii,iv) bring a lot of difficulties and hence the calculations here are more involved.
 \begin{lemma}\label{keylem1} Assume $d\geq2$ and $0<\M<1+\lambda_1$. Then for any $u_0\in C(\overline{\Omega})\cap L^1_0(\Omega)$ and $v_0\in C^1(\overline{\Omega})\cap L^1_0(\Omega)$ satisfying  $\partial_\nu v_0=0$ on $\partial\Omega$,  there holds
	\begin{equation}
		\|\tilde{u}(t)\|_{L^p(\Omega)}\leq c_6e^{-\mu_1t}(1+t^{-\frac{d}{2}(\frac2d-\frac1p)})\big(\|u_0\|_{L^{d/2}(\Omega)}+\|\nabla v_0\|_{L^d(\Omega)}\big)\qquad\forall t>0
	\end{equation} for any $p>1$ satisfying  $\frac d2\leq p<\infty$,  and
	\begin{equation}
		\|\nabla \tilde{v}(t)\|_{L^p(\Omega)}\leq c_7pe^{-\mu_1t}(1+t^{-\frac{d}{2}(\frac1d-\frac1p)})\big(\|u_0\|_{L^{d/2}(\Omega)}+\|\nabla v_0\|_{L^d(\Omega)}\big)\qquad\forall t>0
	\end{equation}for any $p$ satisfying $d\leq p< \infty$, where $c_6,c_7>0$ depend on $d$, $\M$ and $\Omega$ only.
\end{lemma}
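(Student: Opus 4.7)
The plan is to combine Duhamel representations for \eqref{chemo3b} with the heat-semigroup bounds of Lemma \ref{lmpq} and the Hilbert-space exponential decay of Lemma \ref{lmexdecaypp}, via a bootstrap simultaneously in time and in the Lebesgue exponent (in the spirit of Lemma \ref{lmdecay}). First I would write
\begin{align*}
\u(t) &= e^{t\Delta}u_0 - \M\int_0^t e^{(t-s)\Delta}\nabla\!\cdot\!\nabla\v(s)\,ds,\\
\nabla\v(t) &= e^{-t}\nabla e^{t\Delta}v_0 + \int_0^t e^{-(t-s)}\nabla e^{(t-s)\Delta}\u(s)\,ds,
\end{align*}
and apply items (ii)--(iv) of Lemma \ref{lmpq} to each term, producing a coupled pair of integral inequalities relating $\|\u(\cdot)\|_{L^p}$ and $\|\nabla\v(\cdot)\|_{L^q}$ at various exponents.

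For short times $t\in(0,1]$, a direct use of Lemma \ref{lmpq}(ii) with $q=d/2$, $p=d$ produces the borderline non-integrable singularity $(t-s)^{-1}$ in the coupling integral. I would overcome this by inserting a finite chain of intermediate exponents $d/2=p_0<p_1<\cdots<p_N=p$ (and analogously $d=r_0<r_1<\cdots<r_N=p$) with each consecutive gap small enough that $1/p_{k-1}-1/p_k<1/d$, so that Lemma \ref{lmint} applies cleanly. Working with weighted sup-norms
\begin{equation*}
A_{p_k}(t)=\sup_{0<s\leq t}s^{(d/2)(2/d-1/p_k)}\|\u(s)\|_{L^{p_k}},\qquad B_{r_k}(t)=\sup_{0<s\leq t}s^{(d/2)(1/d-1/r_k)}\|\nabla\v(s)\|_{L^{r_k}},
\end{equation*}
one obtains at each stage a coupled Gronwall-type inequality on $[0,1]$ whose solution propagates the bound from stage $k{-}1$ to stage $k$. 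After $N$ steps ($N$ controlled by $d$) this yields the short-time estimates
\begin{equation*}
\|\u(t)\|_{L^{p}}\leq C\,t^{-(d/2)(2/d-1/p)}\bigl(\|u_0\|_{L^{d/2}}+\|\nabla v_0\|_{L^{d}}\bigr),\quad t\in(0,1],
\end{equation*}
and the corresponding $\|\nabla\v(t)\|_{L^{p}}\leq C\,p\,t^{-(d/2)(1/d-1/p)}(\|u_0\|_{L^{d/2}}+\|\nabla v_0\|_{L^{d}})$, with the linear growth in $p$ coming from $L^p$-type bounds on $\nabla(I-\Delta)^{-1}$ and on the Riesz-transform-like pieces appearing in the bootstrap.

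For long times $t\geq 1$, I would first use the short-time estimate at $t=1/2$ to control $\|\u(1/2)\|_{L^2}+\|\nabla\v(1/2)\|_{L^2}$ by $C(\|u_0\|_{L^{d/2}}+\|\nabla v_0\|_{L^{d}})$, then invoke Lemma \ref{lmexdecaypp} to propagate the $L^2_0\times H^1$ exponential decay factor $e^{-\mu_1(t-1/2)}$, and finally apply the short-time estimates on the unit interval $[t-1/2,t]$ (with initial datum at time $t-1/2$) to upgrade from $L^2$ back to $L^p$, preserving the exponential factor. Combining the two regimes and absorbing the factor $(1+t^{\cdots})$ into the stated form yields the conclusion. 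The main obstacle will be the bookkeeping in this two-parameter bootstrap: the asymmetry between the critical spaces $L^{d/2}$ (for $\u$) and $\dot W^{1,d}$ (for $\v$), the restriction $q\geq 2$ in Lemma \ref{lmpq}(iii), and the requirement that $\u$ and $\nabla\v$ be tracked in tandem at closely-spaced exponents so that the accumulating constants from each stage of the chain remain under control.
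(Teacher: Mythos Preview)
Your overall strategy---short-time bootstrap via Duhamel plus heat-kernel $L^p$--$L^q$ bounds, then long-time upgrade through the $L^2$ exponential decay of Lemma~\ref{lmexdecaypp}---is precisely what the paper does, and your long-time argument (Steps at $t=1/2$, decay to $t-1/2$, then short-time from $L^2$ to $L^p$) matches the paper's Step~6 almost verbatim.

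The one notable difference is in how the short-time coupling is resolved. You propose to run a \emph{coupled} Gronwall on the pair $(A_{p_k},B_{r_k})$ at each rung of the exponent ladder. The paper instead substitutes the Duhamel formula for $\v$ directly into that for $\u$ (Step~1), producing a \emph{closed} integral inequality for $\|\u(t)\|_{L^p}$ alone, with a double time integral whose inner and outer kernels each carry only a $(t-s)^{-1/2}$-type singularity. This neatly sidesteps the borderline $(t-s)^{-1}$ you flagged: the problematic $-1$ power is split into two integrable $-1/2$ powers before any exponent-chain argument begins. After that, the paper's Steps~3--5 run a single-variable Gronwall at each exponent level, with the asymmetry between $q=d/2$ and $r=d$ handled by allowing two different exponents in the initial-data terms from the outset (Step~3). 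This buys a cleaner bookkeeping than tracking two coupled quantities simultaneously, though your route would also close.

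One small correction: the linear-in-$p$ factor in the $\nabla\v$ estimate does not come from Riesz-transform bounds. In the paper it arises in Step~5 from the time integral $\int_0^t s^{-(1-d/(2p))}\,ds \leq \tfrac{2p}{d}$, i.e.\ purely from the near-criticality of the exponent in Lemma~\ref{lmint}.
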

\begin{proof}The proof  consists of several steps.

\textbf{Step 1.} 
We derive from variation-of-constants formula  the following expressions for solutions of \eqref{chemo3b}  for all $t>0$\begin{equation}
		\u(t)=e^{t\Delta}u_0-\M\int_0^te^{(t-s)\Delta}\Delta \v(s)ds,\nonumber
\end{equation}and
\begin{equation}
	\v(t)=e^{t(\Delta-1)}v_0+\int_0^te^{(t-s)(\Delta-1)}\u(s)ds.\nonumber
\end{equation}
Substituting $\v$ into the expression of $\u$ leads to \begin{align}
	\u(t)=&e^{t\Delta}u_0-\M\int_0^te^{(t-s)\Delta}\Delta e^{s(\Delta-1)} v_0ds-\M\int_0^te^{(t-s)\Delta}\Delta \int_0^s e^{(s-\tau)(\Delta-1)}\u(\tau)d\tau ds.\non\end{align} 

\textbf{Step 2.}  We claim that when $t\leq1$, for any $1<p<\infty$ and any $r\geq2$ satisfying $\frac1r-\frac1p<\frac1d$, there holds
\begin{equation}\label{claim0}
	\|\int_0^te^{(t-s)\Delta}\Delta e^{s(\Delta-1)} v_0ds\|_{L^p(\Omega)}\leq c\|\nabla v_0\|_{L^r(\Omega)}
\end{equation} with $c$ depends on $\Omega$ only.

In fact, if $2\leq r\leq p<\infty$, by Lemma \ref{lmpq} and Lemma \ref{lmint}, we infer that
\begin{align}
	&\|\int_0^te^{(t-s)\Delta}\Delta e^{s(\Delta-1)} v_0ds\|_{L^p(\Omega)}\non\\
	\leq&k_4\int_0^te^{-\lambda_1(t-s)}(1+(t-s)^{-\frac12})\|\nabla e^{s(\Delta-1)}v_0\|_{L^p(\Omega)}ds\nonumber\\
	\leq& k_3k_4\int_0^t e^{-\lambda_1(t-s)}(1+(t-s)^{-\frac12}) e^{-s(\lambda_1+1)}(1+s^{-\frac{d}{2}(\frac1r-\frac1p)})\|\nabla v_0\|_{L^r(\Omega)}ds\nonumber\\
	\leq& c\|\nabla v_0\|_{L^r(\Omega)}/(1-\frac d2(\frac1r-\frac1p) )\nonumber\\
	\leq & c\|\nabla v_0\|_{L^r(\Omega)}
\end{align}where $c$ depends on  $\Omega$ only  provided that $0\leq \frac1r-\frac1p<\frac{1}{d}$ (when $r=p$, we calculate directly without using Lemma \ref{lmint}). On the other hand, if $p<r$, applying H\"older's inequality, we have
\begin{align}
	&\|\int_0^te^{(t-s)\Delta}\Delta e^{s(\Delta-1)} v_0ds\|_{L^p(\Omega)}\non\\
	\leq&|\Omega|^{\frac1p-\frac1r}\int_0^t \|e^{(t-s)\Delta}\Delta e^{s(\Delta-1)} v_0\|_{L^r(\Omega)}ds\nonumber\\
	\leq&k_3k_4|\Omega|^{\frac1p-\frac1r}\int_0^t e^{-\lambda_1(t-s)}(1+(t-s)^{-\frac12}) e^{-s(\lambda_1+1)} \|\nabla v_0\|_{L^r(\Omega)}ds\nonumber\\
	\leq & c\|\nabla v_0\|_{L^r(\Omega)}.
\end{align}

\textbf{Step 3.} Now, due to Lemma \ref{lmpq}, Lemma \ref{lmint} and \eqref{claim0},  we infer for any $p>1$, $1\leq q\leq p<\infty$, $r\geq2$  and $t\leq 1$ that
\begin{align}\label{tuexp}
	&\|\u(t)\|_{L^p(\Omega)}
	\leq \|e^{t\Delta}u_0\|_{L^p(\Omega)}+\M\int_0^t\|e^{(t-s)\Delta}\Delta e^{s(\Delta-1)}v_0\|_{L^p(\Omega)}ds\non\\
	&\quad+\M\int_0^t\bigg{\|}e^{(t-s)\Delta}\nabla\cdot\left(\nabla\int_0^s   e^{(s-\tau)(\Delta-1)}\u(\tau)d\tau\right)\bigg{\|}_{L^p(\Omega)}ds\nonumber\\
	\leq& k_1 (1+t^{-\frac{d}{2}(\frac1q-\frac1p)})\|u_0\|_{L^{q}(\Omega)}+c \|\nabla v_0\|_{L^r(\Omega)}\non\\
	+&k_2k_4\M\int_0^te^{-\lambda_1(t-s)}(1+(t-s)^{-\frac12})\int_0^se^{-(\lambda_1+1)(s-\tau)}(1+(s-\tau)^{-\frac12})\|\u(\tau)\|_{L^p(\Omega)}d\tau ds\nonumber\\
	\leq &2k_1 t^{-\frac{d}{2}(\frac1q-\frac1p)}\|u_0\|_{L^{q}(\Omega)}+c\|\nabla v_0\|_{L^r(\Omega)}\nonumber\\
	+&k_2k_4\M\int_0^t\int_0^se^{-\lambda_1(t-s)}(1+(t-s)^{-\frac12})e^{-(\lambda_1+1)(s-\tau)}(1+(s-\tau)^{-\frac12})\|\u(\tau)\|_{L^p(\Omega)}d\tau ds
\end{align}provided that $\frac{1}{r}-\frac{1}{p}<\frac{1}{d}$.

Letting $y(t)=t^{\frac{d}{2}(\frac1q-\frac1p)}\|\u(t)\|_{L^p(\Omega)}$, we derive by changing the order in integrations  that for $t\leq1$,
\begin{align}
	y(t)\leq &c\bigg( \|u_0\|_{L^q(\Omega)}+c\|\nabla v_0\|_{L^r(\Omega)}\bigg)\non\\
	+&c\M\int_0^t\int_0^se^{-\lambda_1(t-s)}e^{-(\lambda_1+1)(s-\tau)}(1+(t-s)^{-\frac12})(1+(s-\tau)^{-\frac12})\tau^{-\frac{d}{2}(\frac1q-\frac1p)}y(\tau)d\tau ds\nonumber\\
	=& c\bigg( \|u_0\|_{L^q(\Omega)}+\|\nabla v_0\|_{L^r(\Omega)}\bigg)\non\\
	+&c\M\int_0^t\bigg(\int_\tau^te^{-\lambda_1(t-s)}e^{-(\lambda_1+1)(s-\tau)}(1+(t-s)^{-\frac12})(1+(s-\tau)^{-\frac12})ds\bigg)\tau^{-\frac{d}{2}(\frac1q-\frac1p)}y(\tau)d\tau.
\end{align}By Lemma \ref{lmint} and  direct calculations, we have for $0\leq\frac1q-\frac1p<\frac2{d}$ and for $t\leq1$,
\begin{align}
	&\int_0^t\bigg(\int_\tau^te^{-\lambda_1(t-s)}e^{-(\lambda_1+1)(s-\tau)}(1+(t-s)^{-\frac12})(1+(s-\tau)^{-\frac12})ds\bigg)\tau^{-\frac{d}{2}(\frac1q-\frac1p)}d\tau\nonumber\\
	\leq &c\int_0^t\tau^{-\frac{d}{2}(\frac1q-\frac1p)}e^{-\lambda_1(t-\tau)}d\tau\nonumber\\
	\leq&c\int_0^1\tau^{-\frac{d}{2}(\frac1q-\frac1p)}d\tau\nonumber\\
	\leq& \frac{c}{1-\frac d2(\frac1q-\frac1p)}
\end{align}with $c$ depends on  $\Omega$ only.

\textbf{Step 4.} Choosing $r=q=l\geq2$ in \eqref{tuexp}, then for any $l\leq p<\infty$ satisfying $0\leq\frac1l-\frac1p<\frac1{2d}$, we deduce by Gronwall's inequality that for $t\leq1$,
\begin{equation}
	\|\u(t)\|_{L^p(\Omega)}\leq ct^{-\frac d2(\frac1l-\frac1p)}(\|u_0\|_{L^{l}(\Omega)}+\|\nabla v_0\|_{L^l(\Omega)})
\end{equation}with $c$ depends on $\Omega$ only. As a consequence, we deduce for $t\leq1$  and $2\leq l\leq p<\infty$ satisfying $\frac1l-\frac1p<\frac1{2d}$ that
\begin{align}
		\|\nabla \v(t)\|_{L^p(\Omega)}\leq&\|\nabla e^{t(\Delta-1)}v_0\|_{L^p(\Omega)}+\int_0^t\|\nabla e^{(t-s)(\Delta-1)}\u(s)\|_{L^p(\Omega)}ds\nonumber\\
	\leq& 2k_3t^{-\frac{d}{2}(\frac1l-\frac1p)}\|\nabla v_0\|_{L^l(\Omega)}+k_2\int_0^te^{-(\lambda_1+1)(t-s)}(1+(t-s)^{-\frac12})\|\u(s)\|_{L^p(\Omega)}ds\nonumber\\
	\leq& 2k_3t^{-\frac{d}{2}(\frac1l-\frac1p)}\|\nabla v_0\|_{L^l(\Omega)}\non\\
	&+ck_2 ( \|u_0\|_{L^{l}}+\|\nabla v_0\|_{L^l(\Omega)})\int_0^te^{-(\lambda_1+1)(t-s)}(1+(t-s)^{-\frac12})s^{-\frac{d}{2}(\frac1l-\frac1p)}ds\nonumber\\
	\leq& ct^{-\frac{d}{2}(\frac1l-\frac1p)}(\|u_0\|_{L^{l}}+\|\nabla v_0\|_{L^l(\Omega)})
\end{align}with $c$ depends on  $\Omega$ only, since by Lemma \ref{lmint} again, when $t\leq1$ and  $0\leq\frac1l-\frac1p<\frac1{2d}$,
\begin{align}
	&\int_0^te^{-(\lambda_1+1)(t-s)}(1+(t-s)^{-\frac12})s^{-\frac{d}{2}(\frac1l-\frac1p)}ds\nonumber\\
	\leq& e^{\lambda_1}\int_0^te^{-(\lambda_1+1)(t-s)}(1+(t-s)^{-\frac12})s^{-\frac{d}{2}(\frac1l-\frac1p)}e^{-\lambda_1s}ds\nonumber\\
	\leq &c(1+t^{\min\{\frac12-\frac d2(\frac1l-\frac1p),0\}})\non\\
	\leq&c\nonumber\\
	\leq& ct^{-\frac{d}{2}(\frac1l-\frac1p)}.
\end{align}

 Summing up, we have for $t\leq1$ and any $2\leq l\leq p< \infty$ satisfying $0\leq \frac1l-\frac1p<\frac1{2d}$ that \begin{equation}\label{estpl0}
	\|\u(t)\|_{L^p(\Omega)}+\|\nabla\v(t)\|_{L^p(\Omega)}\leq ct^{-\frac d2(\frac1l-\frac1p)}(\|u_0\|_{L^{l}(\Omega)}+\|\nabla v_0\|_{L^l(\Omega)})
\end{equation}with $c$ depends on $\Omega$ only.

On the other hand, for  any $2\leq l\leq p<\infty$ such that $\frac1l-\frac1p\geq\frac1{2d}$, we may split $(\frac1p,\frac1l)$ and  $(0,t)$ evenly into $N-$intervals, respectively, with $N=d+1$. Denoting the end-points for $N-$intervals of $(\frac1p,\frac1l)$ by $\frac1p=\frac1{l_0}<\frac1{l_1}<...<\frac1{l_N}=\frac1l$, then $\frac1{l_{k+1}}-\frac1{l_k}<\frac1{2d}$ and by iteration, we deduce that  for $t\leq1$
\begin{align}\label{estpl}
	&\|\u(t)\|_{L^p(\Omega)}+\|\nabla \v(t)\|_{L^p(\Omega)}\non\\
	\leq & c(\frac{t}{N})^{-\frac{d}{2}(\frac1{l_1}-\frac1{p})}\left(\|\u(\frac{N-1}{N}t)\|_{L^{l_{1}}(\Omega)}+\|\nabla\v(\frac{N-1}{N}t)\|_{L^{l_{1}}(\Omega)}\right)\nonumber\\
	\leq&...\non\\
	 \leq& c^N(\frac{t}{N})^{-\frac{d}{2}(\frac1l-\frac1p)}(\|u_0\|_{L^{l}(\Omega)}+\|\nabla v_0\|_{L^l(\Omega)})\nonumber\\
	\leq & ct^{-\frac{d}{2}(\frac1l-\frac1p)}(\|u_0\|_{L^{l}(\Omega)}+\|\nabla v_0\|_{L^l(\Omega)})
\end{align}with $c$ depends on $d$ and $\Omega$ only.


\textbf{Step 5.}
Now taking $q=\frac{d}{2}$ and $r=d$ in \eqref{tuexp}, then for any $p>1$ and $\frac d2\leq p\leq d$, we deduce from \eqref{tuexp}  and  Gronwall's inequality that
\begin{align}\label{t1a}
	\|\u(t)\|_{L^p(\Omega)}\leq ct^{-\frac{d}{2}(\frac 2d-\frac1p)} ( \|u_0\|_{L^{d/2}(\Omega)}+\|\nabla v_0\|_{L^d(\Omega)})\qquad\text{for} \;t\leq1
\end{align} where $c$ depends on $\Omega$. It follows that for $t\leq1,$
\begin{align}
	\|\nabla v(t)\|_{L^d(\Omega)}
	\leq&\|\nabla e^{t(\Delta-1)}v_0\|_{L^d(\Omega)}+\int_0^t\|\nabla e^{(t-s)(\Delta-1)}\u(s)\|_{L^d(\Omega)}ds\nonumber\\
	\leq& 2k_3\|\nabla v_0\|_{L^d(\Omega)}+k_2\int_0^te^{-(\lambda_1+1)(t-s)}(1+(t-s)^{-\frac12})\|\u(s)\|_{L^d(\Omega)}ds\nonumber\\
	\leq& 2k_3\|\nabla v_0\|_{L^d(\Omega)}\non\\
	&+ck_2 ( \|u_0\|_{L^{d/2}(\Omega)}+\|\nabla v_0\|_{L^d})\int_0^te^{-(\lambda_1+1)(t-s)}(1+(t-s)^{-\frac12})s^{-\frac{1}{2}}ds\nonumber\\
	\leq& c(\|u_0\|_{L^{d/2}(\Omega)}+\|\nabla v_0\|_{L^d(\Omega)}).\label{347}
\end{align}

When $d<p<\infty$, thanks to \eqref{estpl0}, \eqref{estpl}, \eqref{t1a} and \eqref{347}, we infer that
\begin{align}
	\|\u(t)\|_{L^p(\Omega)}\leq& c(t/2)^{-\frac d2(\frac1{d}-\frac1p)}(\|\u(\frac{t}{2}))\|_{L^{d}(\Omega)}+\|\nabla \v(\frac{t}{2}))\|_{L^{d}(\Omega)})\nonumber\\
	\leq&c(t/2)^{-\frac d2(\frac1{d}-\frac1p)}(t/2)^{-\frac d2(\frac2{d}-\frac1{d})}(\|u_0\|_{L^{d/2}(\Omega)}+\|\nabla v_0\|_{L^{d}(\Omega)})\nonumber\\
	\leq&ct^{-\frac{d}{2}(\frac 2d-\frac1p)} ( \|u_0\|_{L^{d/2}(\Omega)}+\|\nabla v_0\|_{L^d(\Omega)})\qquad\text{for} \;t\leq1
\end{align}with $c$ depends on $d$ and $\Omega$.
 Summing up, we  conclude that \eqref{t1a} holds for any $p>1$ satisfying $\frac{d}{2}\leq p< \infty$ with $c$ depends on $\Omega$ and $d$ at most.

As a consequence, for $t\leq1$ and any $d\leq p< \infty,$\begin{align}\label{t1b}
	\|\nabla \v(t)\|_{L^p(\Omega)}\leq&\|\nabla e^{t(\Delta-1)}v_0\|_{L^p(\Omega)}+\int_0^t\|\nabla e^{(t-s)(\Delta-1)}\u(s)\|_{L^p(\Omega)}ds\nonumber\\
	\leq& 2k_3t^{-\frac{d}{2}(\frac1d-\frac1p)}\|\nabla v_0\|_{L^d(\Omega)}+k_2\int_0^te^{-(\lambda_1+1)(t-s)}(1+(t-s)^{-\frac12})\|\u(s)\|_{L^p(\Omega)}ds\nonumber\\
	\leq& 2k_3t^{-\frac{d}{2}(\frac1d-\frac1p)}\|\nabla v_0\|_{L^d(\Omega)}\non\\
	&+ck_2 ( \|u_0\|_{L^{d/2}(\Omega)}+\|\nabla v_0\|_{L^d})\int_0^te^{-(\lambda_1+1)(t-s)}(1+(t-s)^{-\frac12})s^{-\frac{d}{2}(\frac2d-\frac1p)}ds\nonumber\\
	\leq& c(1+p)t^{-\frac{d}{2}(\frac1d-\frac1p)}(\|u_0\|_{L^{d/2}(\Omega)}+\|\nabla v_0\|_{L^d(\Omega)})
\end{align}with  $c$ depends on $\Omega$ and $d$, since by Lemma \ref{lmint}, when $t\leq1$
\begin{align}
	&\int_0^te^{-(\lambda_1+1)(t-s)}(1+(t-s)^{-\frac12})s^{-\frac{d}{2}(\frac2d-\frac1p)}ds\nonumber\\
	\leq& e^{\lambda_1}\int_0^te^{-(\lambda_1+1)(t-s)}(1+(t-s)^{-\frac12})s^{-\frac{d}{2}(\frac2d-\frac1p)}e^{-\lambda_1s}ds\nonumber\\
	\leq &\frac{cp}{d}\non\\
\leq&\frac{cp}{d}t^{-\frac d2(\frac1d-\frac1p)}.
\end{align}


\textbf{Step 6.} Now, for $t\geq1$, when $p<2$, using H\"older's inequality and Lemma \ref{lmexdecaypp}, we find
\begin{align}\label{t2a}
	\|\u(t)\|_{L^p(\Omega)}+\sqrt{\M}\|\nabla \v(t)\|_{L^p(\Omega)}\leq&c |\Omega|^{\frac1p-\frac12}\bigg(\|\u(t)\|_{L^2(\Omega)}+\sqrt{\M}\|\nabla \v(t)\|_{L^2(\Omega)}\bigg)\non\\
	\leq&c |\Omega|^{\frac1p-\frac12}e^{-\mu_1(t-\frac12)}\left(\|\u(\frac12)\|_{L^2(\Omega)}+\sqrt{\M}\|\nabla \v(\frac12)\|_{L^2(\Omega)}\right)\nonumber\\
	\leq&c |\Omega|^{\frac1p-\frac12}|\Omega|^{\frac12-\frac1d}e^{-\mu_1(t-\frac12)}\left(\|\u(\frac12)\|_{L^d(\Omega)}+\sqrt{\M}\|\nabla \v(\frac12)\|_{L^d(\Omega)}\right)\nonumber\\
	\leq& c(1+\sqrt{\M})e^{-\mu_1t}\bigg(\|u_0\|_{L^{d/2}(\Omega)}+\|\nabla v_0\|_{L^d(\Omega)}\bigg)
\end{align}
with $c$ depends on $d$ and $\Omega$ at most, and when $2\leq p<\infty$, we derive  from \eqref{estpl} and Lemma \ref{lmexdecaypp} that
\begin{align}\label{t2b}
		\|\u(t)\|_{L^p(\Omega)}+\|\nabla \v(t)\|_{L^p(\Omega)}\leq & c\left(\|\u(t-\frac12)\|_{L^2(\Omega)}+\|\nabla \v(t-\frac12)\|_{L^2(\Omega)}\right)\nonumber\\
		\leq & \frac{c}{\sqrt{\M}}\left(\|\u(t-\frac12)\|_{L^2(\Omega)}+\sqrt{\M}\|\nabla \v(t-\frac12)\|_{L^2(\Omega)}\right)\nonumber\\
		\leq& \frac{c}{\sqrt{\M}}e^{-\mu_1(t-1)}\left(\|\u(\frac12)\|_{L^2(\Omega)}+\sqrt{\M}\|\nabla \v(\frac12)\|_{L^2(\Omega)}\right)\nonumber\\
		\leq& \frac{c}{\sqrt{\M}}|\Omega|^{\frac12-\frac1d}e^{-\mu_1(t-1)}\left(\|\u(\frac12)\|_{L^d(\Omega)}+\sqrt{\M}\|\nabla \v(\frac12)\|_{L^d(\Omega)}\right)\nonumber\\
		\leq & \frac{c}{\sqrt{\M}}(1+\sqrt{\M})e^{-\mu_1t}\left(\|u_0\|_{L^{d/2}(\Omega)}+\|\nabla v_0\|_{L^d(\Omega)}\right)
\end{align}with $c$ depends on $d$ and $\Omega$ only. Finally, we may conclude the proof by combining \eqref{t1a}, \eqref{t1b}, \eqref{t2a} and \eqref{t2b}.
\end{proof}
\begin{remark}\label{remark}
Under the assumption of Lemma \ref{keylem1}, for any $2\leq l\leq p<\infty$,  there holds	\begin{equation}\label{lprem}
	\|\u(t)\|_{L^p(\Omega)}+\|\nabla\v(t)\|_{L^p(\Omega)}\leq ce^{-\mu_1t}(1+t^{-\frac d2(\frac1l-\frac1p)})(\|u_0\|_{L^{l}(\Omega)}+\|\nabla v_0\|_{L^l(\Omega)})
\end{equation}with $c$ depends on $d$, $\M$ and $\Omega$ only.
\end{remark}

For the special case $u_0=\nabla\cdot w$ and $v_0=0$, we have the following
\begin{lemma}
\label{Cor}Assume $d\geq2$ and $0<\M<1+\lambda_1.$ Suppose $u_0=\nabla\cdot w$ and $v_0=0$. Then there holds
	\begin{equation}
		\|\tilde{u}(t)\|_{L^p(\Omega)}\leq c_8e^{-\mu_1t}(1+t^{-\frac12-\frac{d}{2}(\frac1q-\frac1p)})\|w\|_{L^{q}(\Omega)}
	\end{equation} for any $\frac d2< q\leq p<\infty$, where $c_8>0$ depends on $d$, $\M$ and $\Omega$  if $d\geq3$ and also depends on $1/|q-1|$ if $d=2$. 	
\end{lemma}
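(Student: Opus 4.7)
The plan is to adapt the scheme of Lemma \ref{keylem1} to the special initial data $u_0=\nabla\cdot w$, $v_0=0$, exploiting the extra half-power of time regularity provided by Lemma \ref{lmpq}(iv) for the divergence form semigroup $e^{t\Delta}\nabla\cdot$. As in Lemma \ref{keylem1}, short-time ($t\leq 1$) and long-time ($t\geq 1$) regimes are treated separately.

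For $t\leq 1$, Duhamel's formula (with $v_0=0$) collapses to
\begin{equation}
\u(t)=e^{t\Delta}\nabla\cdot w-\M\int_0^t e^{(t-s)\Delta}\nabla\cdot\Big(\nabla\!\int_0^s e^{(s-\tau)(\Delta-1)}\u(\tau)\,d\tau\Big)ds.\nonumber
\end{equation}
The homogeneous term is controlled directly by Lemma \ref{lmpq}(iv), which already contributes the desired singular factor $t^{-1/2-(d/2)(1/q-1/p)}$ against $\|w\|_{L^q}$. For the Duhamel term I would apply Lemma \ref{lmpq}(iv) to the outer divergence semigroup and Lemma \ref{lmpq}(ii) to the inner gradient, then swap the order of integration and collapse the inner integral via Lemma \ref{lmint}. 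Substituting $y(t)=t^{1/2+(d/2)(1/q-1/p)}\|\u(t)\|_{L^p}$ turns the resulting inequality into a scalar Volterra one, and Gronwall's lemma closes the short-time estimate provided $1/q-1/p<1/d$.

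When $1/q-1/p\geq 1/d$, I would iterate in the Lebesgue exponent in the spirit of Steps 4--5 of the proof of Lemma \ref{keylem1}: split $(1/p,1/q)$ into finitely many subintervals each of length strictly less than $1/d$, run the previous estimate on a subinterval of $(0,t)$ from $q$ to the next exponent, and chain the bounds together; the constraint $q>d/2$ forces $1/q-1/p<2/d$, so only a bounded number of subintervals (depending on $d$) is required. For $t\geq 1$, I would anchor at $t=1/2$: the short-time bound just established yields $\|\u(1/2)\|_{L^2}\leq C\|w\|_{L^q}$, and an analogous short-time computation on $\v(t)=\int_0^t e^{(t-s)(\Delta-1)}\u(s)\,ds$ via Lemma \ref{lmpq}(ii) controls $\|\nabla\v(1/2)\|_{L^2}$ by the same quantity. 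Lemma \ref{lmexdecaypp} then exponentially transports both $L^2$ norms to time $t-1/2$, and a final invocation of Remark \ref{remark} on the interval $[t-1/2,t]$ upgrades the spatial integrability from $L^2$ to $L^p$ while preserving the $e^{-\mu_1 t}$ factor.

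The principal obstacle I anticipate is the careful bookkeeping of constants through the iteration in the endpoint regime $d=2$, $q\to 1^+$: the constant $k_4$ in Lemma \ref{lmpq}(iv) and the $1/(1-\alpha)$ factor produced by Lemma \ref{lmint} both degenerate as $q\to d/2=1$, and it is exactly this degeneration that must be quantified in order to arrive at the claimed $1/|q-1|$ dependence of $c_8$ recorded in the statement. Apart from this delicate accounting and the need to handle the $L^p$--bound on $\nabla\v$ implicitly as a byproduct of the same iteration (so that Remark \ref{remark} can be used cleanly at the transition $t=1/2$), the argument is a technically careful but conceptually routine refinement of the Lemma \ref{keylem1} scheme.
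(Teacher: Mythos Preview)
Your proposal is correct and follows the same route as the paper's proof. Two points of precision are worth noting. First, in the iteration for $1/q-1/p$ large you cannot literally ``chain'' the divergence-form estimate, because after one time step the data $(\u(t/N),\v(t/N))$ is no longer of the form $(\nabla\cdot w,0)$; the paper uses the divergence bound once (contributing the extra $t^{-1/2}$) and then switches to the non-divergence estimate \eqref{estpl} from Step~4 of Lemma~\ref{keylem1}, which however requires the intermediate exponent to be $\geq 2$---this constraint, not mere bookkeeping, is what forces the separate treatment of $d=2$ and $d=3$ (where $q$ can be below $2$). Second, the $1/|q-1|$ blow-up in $d=2$ does not arise from $k_4$ (which is fixed, independent of $q$) but from the Gronwall exponential $\exp\{c\M/(\tfrac12-\tfrac d2(\tfrac1q-\tfrac1p))\}$ in the sub-case $\tfrac12<\tfrac1p<\tfrac34<\tfrac1q<1$, where one cannot reach an exponent $\geq 2$ in a single divergence step and must absorb the degeneracy directly.
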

\begin{proof}Under our assumption, for any $t\leq1$, in the same way as before,  we infer for any  $1< q\leq p<\infty$ that
\begin{align}\label{tuexpaaa}
	&\|\u(t)\|_{L^p(\Omega)}
\leq 2k_4 t^{-\frac12-\frac{d}{2}(\frac1q-\frac1p)}\|w\|_{L^{q}(\Omega)}\non\\
	&+k_2k_4\M\int_0^t\int_0^se^{-\lambda_1(t-s)}(1+(t-s)^{-\frac12})e^{-(\lambda_1+1)(s-\tau)}(1+(s-\tau)^{-\frac12})\|\u(\tau)\|_{L^p(\Omega)}d\tau ds.
\end{align} Letting $y(t)=t^{\frac12+\frac{d}{2}(\frac1q-\frac1p)}\|\u(t)\|_{L^q(\Omega)}$, we find that for $t\leq1$,
\begin{align}\label{corpq0}
	&y(t)\non\\
	\leq &c\|w\|_{L^q(\Omega)}\non\\
	+&c\M\int_0^t\int_0^se^{-\lambda_1(t-s)}e^{-(\lambda_1+1)(s-\tau)}(1+(t-s)^{-\frac12})(1+(s-\tau)^{-\frac12})\tau^{-\frac12-\frac{d}{2}(\frac1q-\frac1p)}y(\tau)d\tau ds\nonumber\\
	=& c \|w\|_{L^q(\Omega)}\non\\
	+&c\M\int_0^t\bigg(\int_\tau^te^{-\lambda_1(t-s)}e^{-(\lambda_1+1)(s-\tau)}(1+(t-s)^{-\frac12})(1+(s-\tau)^{-\frac12})ds\bigg)\tau^{-\frac12-\frac{d}{2}(\frac1q-\frac1p)}y(\tau)d\tau,
\end{align}where by Lemma \ref{lmint} and  direct calculations that for $\frac1q-\frac1p<\frac1{d}$ and for $t\leq1$,
\begin{align}
	&\int_0^t\bigg(\int_\tau^te^{-\lambda_1(t-s)}e^{-(\lambda_1+1)(s-\tau)}(1+(t-s)^{-\frac12})(1+(s-\tau)^{-\frac12})ds\bigg)\tau^{-\frac12-\frac{d}{2}(\frac1q-\frac1p)}d\tau\nonumber\\
	\leq &c\int_0^t\tau^{-\frac12-\frac{d}{2}(\frac1q-\frac1p)}e^{-\lambda_1(t-\tau)}d\tau\nonumber\\
	\leq&c\int_0^1\tau^{-\frac12-\frac{d}{2}(\frac1q-\frac1p)}d\tau\nonumber\\
	\leq& \frac{c}{\frac12-\frac d2(\frac1q-\frac1p)}
\end{align}with $c$ depends on $\Omega$ only. As a result, we deduce from Gronwall's inequality that for $t\leq1$ and  $\frac1q-\frac1p<\frac1{2d}$,
\begin{equation}\label{t1aa}
	\|\u(t)\|_{L^p(\Omega)}\leq ct^{-\frac12-\frac d2(\frac1q-\frac1p)}\|w\|_{L^q(\Omega)}
\end{equation}
with $c$ depends on  $\Omega$.  It follows that for $t\leq1$ and  $\frac1q-\frac1p<\frac1{2d}$
\begin{align}\label{t1bb}
	\|\nabla\v(t)\|_{L^p(\Omega)}\leq&\int_0^t\|\nabla e^{(t-s)(\Delta-1)}\u(s)\|_{L^p(\Omega)}ds\nonumber\\
	\leq& k_2\int_0^te^{-(\lambda_1+1)(t-s)}(1+(t-s)^{-\frac12})\|\u(s)\|_{L^p(\Omega)}ds\nonumber\\
	\leq& ck_2 \|w\|_{L^{q}(\Omega)}\int_0^te^{-(\lambda_1+1)(t-s)}(1+(t-s)^{-\frac12})s^{-\frac12-\frac{d}{2}(\frac1q-\frac1p)}ds\nonumber\\
	\leq& ct^{-\frac12-\frac{d}{2}(\frac1q-\frac1p)}\|w\|_{L^{q}(\Omega)},\end{align}
since for $t\leq1$, there holds\begin{align}
	&\int_0^te^{-(\lambda_1+1)(t-s)}(1+(t-s)^{-\frac12})s^{-\frac12-\frac{d}{2}(\frac1q-\frac1p)}ds\nonumber\\
	\leq&e^{\lambda_1}\int_0^te^{-(\lambda_1+1)(t-s)}(1+(t-s)^{-\frac12})s^{-\frac12-\frac{d}{2}(\frac1q-\frac1p)}e^{-\lambda_1s}ds\nonumber\\
	\leq&c(1+t^{-\frac{d}{2}(\frac1q-\frac1p)})\nonumber\\
	\leq&ct^{-\frac12-\frac{d}{2}(\frac1q-\frac1p)}.
\end{align}
On the other hand, note that our assumption ensures $0\leq\frac1q-\frac1p<\frac2d$. Therefore, for the case $\frac1q-\frac1p\geq\frac1{2d}$, we may split $(\frac1p,\frac1q)$ into four parts evenly with endpoints denoted by $\frac1p=\frac1{l_0}<\frac1{l_1}<...<\frac1{l_4}=\frac1q$ such that $0<\frac{1}{l_{k+1}}-\frac1{l_k}<\frac{1}{2d}$. 

Now, we divide our discussion into three cases regarding the dimensions. First, if $d\geq4$, there holds $2<q\leq p<\infty$. Then we may using \eqref{estpl} to derive that
\begin{align}
		\|\u(t)\|_{L^p(\Omega)}+\|\nabla \v(t)\|_{L^p(\Omega)}
		\leq&  c(t/2)^{-\frac d2(\frac1{l_3}-\frac1{p})}(\|\u(\frac {t}2)\|_{L^{l_3}(\Omega)}+\|\nabla \v(\frac {t}2)\|_{L^{l_3}(\Omega)})\nonumber\\
	\leq& c(t/2)^{-\frac d2(\frac1{l_3}-\frac1p)}(t/2)^{-\frac12-\frac d2(\frac1q-\frac1{l_3})}\|w\|_{L^q(\Omega)}\nonumber\\
	\leq & 	ct^{-\frac12-\frac d2(\frac1q-\frac1p)}\|w\|_{L^q(\Omega)}.
\end{align} Secondly, if $d=3$ and $\frac1q-\frac1p\geq\frac1{2d}=\frac16$, we must have $p>2$ since $q>\frac d2=\frac32$. If $2< q\leq p$, the proof is the same as above. If $1<q\leq2<p$, we infer  by the fact $\frac1q-\frac12<\frac23-\frac12=\frac16=\frac1{2d}$ that
\begin{align}
		\|\u(t)\|_{L^p(\Omega)}+\|\nabla \v(t)\|_{L^p(\Omega)}\leq&  c(t/2)^{-\frac d2(\frac1{2}-\frac1p)}(\|\u(\frac {t}2)\|_{L^{2}(\Omega)}+\|\nabla \v(\frac {t}2)\|_{L^{2}(\Omega)})\nonumber\\
	\leq& ct^{-\frac d2(\frac1{2}-\frac1p)} t^{-\frac12-\frac d2(\frac1q-\frac1{2})}\|w\|_{L^q(\Omega)}\nonumber\\
	\leq & 	ct^{-\frac12-\frac d2(\frac1q-\frac1p)}\|w\|_{L^q(\Omega)}.
\end{align}Lastly, if $d=2$, we have $1<q\leq p<\infty$. The cases $2<q\leq p$ and $\frac43<q\leq 2<p$ can be dealt with in the same way as above. It remains to consider the case $\frac12<\frac1p<\frac34<\frac1q<1$ and the case $\frac1p\leq\frac12<\frac34<\frac1q<1.$ In the former case, we  infer from \eqref{corpq0} and Gronwall's inequality that
\begin{align}
		\|\u(t)\|_{L^p(\Omega)}\leq& 	ct^{-\frac12-\frac d2(\frac1q-\frac1p)}\exp\{\frac{c\M}{\frac12-\frac1q+\frac1p}\}\|w\|_{L^q(\Omega)}\nonumber\\
		\leq& ct^{-\frac12-\frac d2(\frac1q-\frac1p)}\exp\{\frac{c\M}{1-\frac1q}\}\|w\|_{L^q(\Omega)}\nonumber\\
		\leq& c_9t^{-\frac12-\frac d2(\frac1q-\frac1p)}\|w\|_{L^q(\Omega)}
\end{align}where $c_9>0$ depends on $d$, $\Omega$ and $1/|q-1|$.
For the latter case, we infer from above, \eqref{estpl} \eqref{corpq0} and \eqref{t1bb} that
\begin{align}
		\|\u(t)\|_{L^p(\Omega)}\leq& c(t/2)^{-\frac d2(\frac1{2}-\frac1p)}(\|\u(\frac {t}2)\|_{L^{2}(\Omega)}+\|\nabla \v(\frac {t}2)\|_{L^{2}(\Omega)})\nonumber\\
				\leq&c(t/2)^{-\frac d2(\frac1{2}-\frac1p)}(t/2)^{-\frac12-\frac d2(\frac1q-\frac12)}(1+\frac{c\M}{\frac12-\frac1q+\frac12})\exp\{\frac{c\M}{\frac12-\frac1q+\frac12}\}\|w\|_{L^q(\Omega)}\nonumber\\
		\leq& ct^{-\frac12-\frac d2(\frac1q-\frac1p)}(1+\frac{c\M}{1-1/q})\exp\{\frac{c\M}{1-1/q}\}\|w\|_{L^q(\Omega)}\nonumber\\
		\leq& c_9t^{-\frac12-\frac d2(\frac1q-\frac1p)}\|w\|_{L^q(\Omega)}
\end{align}where $c_9>0$ depends on $d$, $\Omega$ and $1/|q-1|$.

Summing up, we obtain that for $t\leq 1$ and for any $\max\{1,\frac d2\}< q\leq p<\infty$, there holds 
\begin{equation}\label{t1ccc}
		\|\u(t)\|_{L^p(\Omega)}+\|\nabla \v(t)\|_{L^p(\Omega)}\leq ct^{-\frac12-\frac d2(\frac1q-\frac1p)}\|w\|_{L^q(\Omega)}\end{equation}with  $c$ depends on $d$ and  $\Omega$ if $d\geq3$ and also depends on $1/|q-1|$ if $d=2.$

Now for $t\geq1$, using H\"older's inequality and Lemma \ref{lmexdecaypp}, we can prove in the same way as before that
\begin{align}\label{t2aa}
	\|\u(t)\|_{L^p(\Omega)}+\sqrt{\M}\|\nabla \v(t)\|_{L^p(\Omega)}	\leq ce^{-\mu_1t}\|w\|_{L^{q}(\Omega)}
\end{align}
with $c$ depends on $d$, $\M$ and  $\Omega$ if $d\geq3$ and also depends on $1/|q-1|$ if $d=2.$ Now, our assertion follows from  \eqref{t1ccc} and \eqref{t2aa}.

\end{proof}

\section{Proof of Theorem \ref{TH0} and Theorem \ref{TH1}}
With the key  $L^p-L^q$ decay estimates established in Lemmas \ref{lmdecay}--\ref{cor2} and Lemmas \ref{keylem1}--\ref{Cor}, it remains to complete our proof by an adaptation of the one-step contradiction argument from \cite{Win10,Cao} or using the implicit function theory as done in \cite{KMS16}. In this paper, we choose the former way for convenience since we already have Lemma \ref{loex1} at hand. The main idea is to compute the difference  between the solution  of the nonlinear problem and the one to the corresponding linearized problem. Since the nonlinear problem can be regarded as its linearized one with a quadratic perturbation, with small initial data, the difference between their solutions should also be of a quadratic order.
The major difference is now $\M>0$, we have to compare the associated nonlinear semigroup with the linearized ones $e^{t\mathcal{L}}$ or $e^{t\mathcal{A}}$ while when $\M=0$, we only have to compute its difference with $e^{t\Delta}$ whose decay behavior is well-known as shown in Lemma \ref{lmpq}.

Now, we report the proof in detail as follows.
\begin{proposition}\label{prop0}
	Suppose $d\geq2$ and $0<\M<1+\lambda_1$. For any fixed $q_0\in(\frac{d}{2},d)$ and $\lambda'< \mu_0$, there exists $\varepsilon_0>0$ depending on $d$, $\Omega$, $q_0$ and $\lambda'$ such that for any initial datum $u_0\in C(\overline{\Omega})\cap L^1_0(\Omega)$ satisfying $u_0+\M\geq0$ and $\|u_0\|_{L^{d/2}(\Omega)}\leq \varepsilon$ for some $\varepsilon<\varepsilon_0$, problem  \eqref{chemo2} with $\gamma=0$ has global classical solution which is globally bounded and satisfies
	\begin{equation}
		\|u(t)-e^{t\L}u_0\|_{L^\theta(\Omega)}\leq \varepsilon e^{-\lambda't}(1+t^{-1+\frac{d}{2\theta}}),\qquad \forall \;t>0.
	\end{equation}for all $\theta\in[q_0,\infty]$.
\end{proposition}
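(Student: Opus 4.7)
The plan is to run the one-step contradiction argument of Winkler--Cao, but with the comparison semigroup replaced by the linearised one $e^{t\mathcal{L}}$, for which Lemma \ref{lmdecay} and Lemma \ref{cor2} furnish sharp $L^p$-$L^q$ bounds of exactly the same type as for the Neumann heat semigroup in the $\mathcal{M}=0$ case. Writing the $\gamma=0$ problem in the form $u_t=\mathcal{L}u-\nabla\cdot(u\nabla v)$ (where $v=(I-\Delta)^{-1}u$), the variation-of-constants formula gives
$$\zeta(t):=u(t)-e^{t\mathcal{L}}u_0=-\int_0^t e^{(t-s)\mathcal{L}}\nabla\cdot(u\nabla v)(s)\,ds,$$
so $\zeta$ is the Duhamel integral of the quadratic nonlinearity and should, for $\varepsilon$-small data, be of order $\varepsilon^2$, strictly better than the linear part $e^{t\mathcal{L}}u_0$ which is only $O(\varepsilon)$.

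Next, with $q_0\in(d/2,d)$ and $\lambda'<\mu_0$ given, I would introduce
$$T:=\sup\Bigl\{\tau\in(0,T_{\max}):\ \|\zeta(t)\|_{L^\theta(\Omega)}\le 2\varepsilon\,e^{-\lambda' t}(1+t^{-1+\frac{d}{2\theta}})\ \text{for all}\ t\in(0,\tau),\ \theta\in[q_0,\infty]\Bigr\}.$$
Local existence (Lemma \ref{loex1}) together with continuity ensures $T>0$; the aim is to show $T=T_{\max}=\infty$. For $t\in(0,T)$, combining the defining bound on $\zeta$ with Lemma \ref{lmdecay} applied to $e^{t\mathcal{L}}u_0$ yields the bootstrap estimate
$$\|u(t)\|_{L^\theta(\Omega)}\le C\varepsilon(1+t^{-1+\frac{d}{2\theta}})e^{-\lambda' t},\qquad\theta\in[q_0,\infty].$$
Because $-\Delta v+v=u$ with homogeneous Neumann data and $\int_\Omega v\,dx=0$, classical elliptic regularity together with the Sobolev embedding transfers the same kind of decay estimate to $\nabla v$, modulo a Sobolev gain on the exponent.

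To close the estimate I would feed these bounds back into the Duhamel formula via Lemma \ref{cor2}, which for a suitable $q<\theta$ gives
$$\|\zeta(t)\|_{L^\theta(\Omega)}\le c_5\int_0^t(1+(t-s)^{-\tfrac12-\tfrac{d}{2}(\tfrac1q-\tfrac1\theta)})e^{-\mu_0(t-s)}\|u\nabla v\|_{L^q(\Omega)}(s)\,ds.$$
Splitting $\|u\nabla v\|_{L^q}\le\|u\|_{L^{p_1}}\|\nabla v\|_{L^{p_2}}$ with $1/q=1/p_1+1/p_2$ and inserting the bootstrap bounds from the previous paragraph produces, for $s\in(0,T)$, a pointwise quadratic estimate of the form $\|u\nabla v\|_{L^q}(s)\le C\varepsilon^2 e^{-2\lambda' s}(1+s^{-\beta})$ with some $\beta<1$; Lemma \ref{lmint} then evaluates the time integral and delivers precisely $\|\zeta(t)\|_{L^\theta(\Omega)}\le C\varepsilon^2 e^{-\lambda' t}(1+t^{-1+\frac{d}{2\theta}})$. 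Choosing $\varepsilon_0$ so that $C\varepsilon_0\le 1$ strictly improves the defining constant $2\varepsilon$, and continuity forces $T=T_{\max}$; then $\sup_{0<t<T_{\max}}\|u(t)\|_{L^{q_0}(\Omega)}<\infty$ (since $q_0>d/2$) together with the blow-up criterion of Lemma \ref{loex1} gives $T_{\max}=\infty$, whence the proposition.

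The main obstacle is the closing estimate: one has to choose the intermediate exponents $q,p_1,p_2$ (within the range $[q_0,\infty]$ allowed by the bootstrap, augmented by the Sobolev lift from $u$ to $\nabla v$) so that simultaneously \emph{(i)} the H\"older split makes sense for every $\theta\in[q_0,\infty]$, \emph{(ii)} the time singularity $s^{-\beta}$ is integrable against the kernel $(t-s)^{-1/2-d/2(1/q-1/\theta)}$, and \emph{(iii)} after invoking Lemma \ref{lmint} the final time decay matches precisely $-1+d/(2\theta)$ uniformly in $\theta$. The strict inclusion $q_0\in(d/2,d)$ is what leaves exactly enough room to meet (i)--(iii) at once, and the endpoints $\theta=q_0$ and $\theta=\infty$ demand extra care because both Lemma \ref{cor2} and the Sobolev lift degenerate there.
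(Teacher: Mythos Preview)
Your proposal is correct and follows essentially the same route as the paper: Duhamel with the linearised semigroup $e^{t\mathcal{L}}$, a bootstrap time $T$ defined via the weighted $L^\theta$-bound on $\zeta=u-e^{t\mathcal{L}}u_0$, Lemma~\ref{lmdecay} for the linear part, elliptic regularity plus Sobolev for $\nabla v$, Lemma~\ref{cor2} on the Duhamel integral, H\"older splitting, and Lemma~\ref{lmint} to close. The paper handles your ``main obstacle'' exactly as you anticipate, by fixing an auxiliary $\theta_0\in(d,\tfrac{dq_0}{d-q_0})$ and treating the ranges $\theta\in[q_0,\theta_0]$ and $\theta\in(\theta_0,\infty]$ with two different choices of the H\"older exponents (using $q=q_0$ in the first range and $q=\theta_0$ in the second).
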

\begin{proof}According to Lemma \ref{loex1}, problem \eqref{chemo1} with $\gamma=0$ and nonnegative initial data $\rho_0=u_0+\M$  has a unique classical solution on $[0,T_{\mathrm{max}})$ and if $T_{\mathrm{max}}<\infty,$ we have $\limsup\limits_{t\nearrow T_{\mathrm{max}}}\|\rho(\cdot,t)\|_{L^\infty}=\infty.$ Therefore, for problem \eqref{chemo2} with $\gamma=0$, we obtain a classical solution $(u,v)=(\rho-\M,c-\M)$ on $[0,T_{\mathrm{max}})$ and if $T_{\mathrm{max}}<\infty,$ we have $\limsup\limits_{t\nearrow T_{\mathrm{max}}}\|u(\cdot,t)\|_{L^\infty(\Omega)}=\infty.$

For fixed $\frac d2<q_0<d$ and $d<\theta_0<\frac{dq_0}{d-q_0}$, we set 
\begin{equation}\nonumber
	T_0\triangleq\sup\bigg{\{}T>0:\|u(t)-e^{t\mathcal{L}}u_0\|_{L^\theta}\leq\varepsilon e^{-\lambda't}(1+t^{-1+\frac{d}{2\theta}}),\;\;\text{for all}\;t\in[0,T)\;\text{and all} \;\theta\in[q_0,\infty].\bigg{\}}
\end{equation}
Then $T_0$ is well-defined  and positive with $T_0\leq T_{\mathrm{max}}$, because both $u(t)$ and $e^{t\L}u_0$ are bounded near $t=0$ due to Lemma \ref{loex1} and Lemma \ref{lmdecay}, while on the other hand as $t\rightarrow0^+,$ $t^{-1+\frac{d}{2\theta}}\geq t^{-1+\frac{d}{2q_0}}\rightarrow+\infty$ uniformly with respect to $\theta\in[q_0,\infty]$. Now we claim that when $\varepsilon_0$ is sufficiently small, we have $T_0=\infty.$
First, we observe that by Lemma \ref{lmdecay},
\begin{align}
	\|u(t)\|_{L^\theta(\Omega)}\leq& \|u(t)-e^{t\mathcal{L}}u_0\|_{L^\theta(\Omega)}+\|e^{t\mathcal{L}}u_0\|_{L^\theta(\Omega)}\nonumber\\
	\leq& \varepsilon (1+t^{-1+\frac{d}{2\theta}})e^{-\lambda't}+c_1(1+t^{-1+\frac{d}{2\theta}})e^{-\mu_0t}\|u_0\|_{L^{d/2}(\Omega)}\nonumber\\
	\leq& C_1\varepsilon (1+t^{-1+\frac{d}{2\theta}})e^{-\lambda't}
\end{align} holds for all $0<t<T_0$ where  $C_1$ depends on $d$ and $\Omega$. On the other hand, by Sobolev embedding theorem and the classical theory for elliptic equations,  we have
\begin{align}\label{embed}
	\|\nabla v(t)\|_{L^{q_2}(\Omega)}\leq C_2\| v(t)\|_{W^{2,q_0}(\Omega)}\leq C_2\|u(t)\|_{L^{q_0}(\Omega)}
\end{align}with $\frac1{q_0}=\frac1d+\frac1{q_2}$  and $C_2$ depends on $d$, $\Omega$ and $q_0$.

 Due to the variation-of-constants formula, there holds
	\begin{align}\label{express1}
		u(t)-e^{t\L}u_0=-\int_0^t e^{(t-s)\L}\nabla\cdot(u(s)\nabla v(s))ds.
\end{align}
Now  we  estimate the term on right hand-side of \eqref{express1}. Invoking Lemma \ref{cor2}, for $\theta\in(\theta_0,\infty]$, there holds 
\begin{align}
	&\|u(t)-e^{t\L}u_0\|_{L^\theta(\Omega)}\non\\
	\leq&\bigg{\|}\int_0^t e^{(t-s)\L}\nabla\cdot(u\nabla v)(s)ds\bigg{\|}_{L^\theta(\Omega)}\nonumber\\
	\leq& c_5\int_0^te^{-\mu_0 (t-s)}(1+(t-s)^{-\frac12-\frac d2(\frac{1}{\theta_0}-\frac{1}{\theta})})\big{\|}u\nabla v\big{\|}_{L^{\theta_{0}}(\Omega)}ds\nonumber\\
	\leq&c_5\int_0^te^{-\mu_0 (t-s)}(1+(t-s)^{-\frac12-\frac d2(\frac{1}{\theta_0}-\frac{1}{\theta})})\|u(s)\|_{L^{q_1}(\Omega)}\|\nabla v(s)\|_{L^{q_2}(\Omega)}ds\nonumber\\
	\leq&c_5C_2\int_0^te^{-\mu_0 (t-s)}(1+(t-s)^{-\frac12-\frac d2(\frac{1}{\theta_0}-\frac{1}{\theta})})\|u(s)\|_{L^{q_1}(\Omega)}\|u(s)\|_{L^{q_0}(\Omega)}ds\nonumber\\
	\leq& c_5C_1C_2\int_0^te^{-\mu_0 (t-s)}(1+(t-s)^{-\frac12-\frac d2(\frac{1}{\theta_0}-\frac{1}{\theta})})\varepsilon(1+s^{-1+\frac{d}{2q_1}}) e^{-\lambda's}\varepsilon(1+s^{-1+\frac{d}{2q_0}}) e^{-\lambda's}ds\nonumber\\
	\leq &c_5C_1C_2\varepsilon^2\int_0^te^{-\mu_0(t-s)}e^{-\lambda's}(1+(t-s)^{-\frac12-\frac d2(\frac{1}{\theta_0}-\frac{1}{\theta})})(1+s^{-\frac32+\frac{d}{2\theta_0}})ds\nonumber\\
	\leq& C_3\varepsilon^2 e^{-\lambda't}(1+t^{-1+\frac{d}{2\theta}})
\end{align}for all $t\in[0,T_0)$  with $C_3>0$ depends on $d$, $\Omega$, $|\mu_0-\lambda'|$, $q_0$ and $\theta_0$, since under our assumption we can find $q_1,q_2 \geq q_0$ such that $\frac{1}{\theta_0}=\frac{1}{q_1}+\frac{1}{q_2}$ and $\frac{1}{q_0}=\frac{1}{q_2}+\frac{1}{d}$.

On the other hand, for $\theta\in[q_0,\theta_0]$, we infer that
\begin{align}
	&\|u(t)-e^{t\L}u_0\|_{L^\theta(\Omega)}\non\\
	\leq&\bigg{\|}\int_0^t e^{(t-s)\L}\nabla\cdot(u\nabla v)(s)ds\bigg{\|}_{L^\theta(\Omega)}\nonumber\\
	\leq& c_5\int_0^te^{-\mu_0 (t-s)}(1+(t-s)^{-\frac12-\frac d2(\frac{1}{q_0}-\frac{1}{\theta})})\big{\|}u\nabla v\big{\|}_{L^{q_{0}}(\Omega)}ds\nonumber\\
	\leq&c_5\int_0^te^{-\mu_0 (t-s)}(1+(t-s)^{-\frac12-\frac d2(\frac{1}{q_0}-\frac{1}{\theta})})\|u(s)\|_{L^{d}(\Omega)}\|\nabla v(s)\|_{L^{q_2}(\Omega)}ds\nonumber\\
	\leq&c_5C_2\int_0^te^{-\mu_0 (t-s)}(1+(t-s)^{-\frac12-\frac d2(\frac{1}{q_0}-\frac{1}{\theta})})\|u(s)\|_{L^{d}(\Omega)}\|u(s)\|_{L^{q_0}(\Omega)}ds\nonumber\\
	\leq& c_5C_1C_2\int_0^te^{-\mu_0 (t-s)}(1+(t-s)^{-\frac12-\frac d2(\frac{1}{q_0}-\frac{1}{\theta})})\varepsilon(1+s^{-\frac12}) e^{-\lambda's}\varepsilon(1+s^{-1+\frac{d}{2q_0}}) e^{-\lambda's}ds\nonumber\\
	\leq &c_5C_1C_2\varepsilon^2\int_0^te^{-\mu_0(t-s)}e^{-\lambda's}(1+(t-s)^{-\frac12-\frac d2(\frac{1}{q_0}-\frac{1}{\theta})})(1+s^{-\frac32+\frac{d}{2q_0}})ds\nonumber\\
	\leq& C_3\varepsilon^2 e^{-\lambda't}(1+t^{-1+\frac{d}{2\theta}})
\end{align}for all $t\in[0,T_0)$.  

As a result, we conclude that for all $\theta\in[q_0,\infty]$, there holds
\begin{equation}
	\|u(t)-e^{t\L}u_0\|_{L^\theta(\Omega)}\leq C_3\varepsilon^2 e^{-\lambda't}(1+t^{-1+\frac{d}{2\theta}})
\end{equation}for all $t\in[0,T_0)$ with $C_3>0$ depends on $d$, $\Omega$, $|\mu_0-\lambda'|$, $q_0$ and $\theta_0$, but is independent of $T_0$ or $t$.
Choosing $\varepsilon_0<\frac{1}{2C_3}$, we conclude that $T_0=\infty$ and hence $T_{\mathrm{max}}=\infty$ as well which completes the proof.
\end{proof}
Now we complete the proof of Theorem \ref{TH0}. We may fix $q_0\in(\frac d2,d)$, $\theta_0\in(d,\frac{dq_0}{d-q_0})$ and $\lambda'<\mu_0$, then by Proposition \ref{prop0}, we get $\varepsilon_0>0$ such that $(u,v)$ exists globally under the smallness assumptions. Recalling that $(u,v)$ is just a reduction of $\M$ from $(\rho,c)$ we conclude that under the assumption of Theorem \ref{TH0},  problem \eqref{chemo1} has a unique classical solution  $(\rho,c)$ that is globally bounded. Moreover, since $\|e^{t\L}u_0\|_{L^\infty}\leq C e^{-\mu_0 t}\|u_0\|_{L^\infty}$ for $t\geq1$ due to Lemma \ref{lmdecay}, we infer that for $t\geq1,$
\begin{align}
	\|u(t)\|_{L^\infty(\Omega)}\leq \|u(t)-e^{t\L}u_0\|_{L^\infty(\Omega)}+\|e^{t\L}u_0\|_{L^\infty(\Omega)}\leq Ce^{-\lambda't}
\end{align}which indicates that
\begin{equation}
	\|\rho(t)-\M\|_{L^\infty(\Omega)}+\|\nabla c(t)\|_{L^\infty(\Omega)}\leq Ce^{-\lambda't}
\end{equation}for $t\geq1$ with some $C>0$. This completes the proof of Theorem \ref{TH0}.


Now, we prove Theorem \ref{TH1}. To this aim, we prove the following result for the reduced cell density and chemical concentration $(u,v)$ of \eqref{chemo2}.
\begin{proposition}
			Let $d\geq2$ and $0<\M<1+\lambda_1$. For any fixed $q_0,\theta_0>0$ such that $\frac d2<q_0<d$ and $d<\theta_0<\frac{dq_0}{d-q_0}$, there exists $\varepsilon_0>0$ depending on $d$, $q_0$, $\M$ and $\Omega$  such that for any initial data $(u_0,v_0)\in C(\overline{\Omega})\cap L^1_0(\Omega)\times C^1(\overline{\Omega})\cap L^1_0(\Omega)$ satisfying  $\partial_\nu v_0=0$ on $\partial\Omega$,  $u_0\geq-\M$, $v_0\geq-\M$ and $\|u_0\|_{L^{d/2}(\Omega)}+\|\nabla v_0\|_{L^d(\Omega)}\leq\varepsilon$ for some $\varepsilon<\varepsilon_0$, system \eqref{chemo2} with $\gamma=1$ has  global classical solution $(u,v)$ which is bounded such that
	\begin{equation}
		\|u(t)-\u(t)\|_{L^\theta(\Omega)}\leq\varepsilon e^{-\mu't }(1+t^{-1+\frac{d}{2\theta}})\qquad\text{for all}\;\;t>0
	\end{equation} and for all $\theta\in[q_0,\theta_0]$ with $\mu'<\mu_1=\lambda_1-\frac12\bigg(\sqrt{4\lambda_1\M+1}-1\bigg)>0.$

\end{proposition}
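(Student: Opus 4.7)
The plan is to adapt the one-step contradiction argument of Proposition \ref{prop0}, with the parabolic--elliptic semigroup $e^{t\mathcal{L}}$ replaced by the fully parabolic $e^{t\mathcal{A}}$ and Lemmas \ref{lmdecay}--\ref{cor2} replaced by Lemma \ref{keylem1} (together with Remark \ref{remark}) and Lemma \ref{Cor}. By Lemma \ref{loex1}, problem \eqref{chemo2} with $\gamma=1$ and nonnegative data admits a unique classical solution $(u,v)$ on a maximal interval $[0,T_{\mathrm{max}})$ with the $L^\infty$ blow-up criterion on $u$. I would fix $q_1,q_2\ge q_0$ such that $\tfrac1{\theta_0}=\tfrac1{q_1}+\tfrac1{q_2}$ and $\tfrac1{q_0}=\tfrac1d+\tfrac1{q_2}$ exactly as in the $\gamma=0$ proof, and introduce
\[
T_0:=\sup\Bigl\{T\in(0,T_{\mathrm{max}}]:\ \|u(t)-\tilde u(t)\|_{L^\theta}\le\varepsilon e^{-\mu' t}(1+t^{-1+\frac{d}{2\theta}})\ \forall\theta\in[q_0,\theta_0],\ \forall t\in[0,T)\Bigr\}.
\]
Lemma \ref{loex1}, the smoothing of $e^{t\mathcal{A}}$ from Lemma \ref{keylem1}, and the fact that $t^{-1+d/(2\theta)}\to\infty$ as $t\downarrow 0$ uniformly in $\theta\in[q_0,\theta_0]$ guarantee $T_0>0$.

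For $t<T_0$, the triangle inequality together with Lemma \ref{keylem1} yields $\|u(t)\|_{L^\theta}\le C_1\varepsilon e^{-\mu' t}(1+t^{-1+\frac{d}{2\theta}})$ for all $\theta\in[q_0,\theta_0]$, with $C_1$ independent of $T_0$. Rather than bootstrap $\nabla v-\nabla\tilde v$ separately, I would obtain $\nabla v$ by decomposing $\nabla v=\nabla\tilde v+\nabla(v-\tilde v)$: Lemma \ref{keylem1} already controls $\|\nabla\tilde v(t)\|_{L^{q_2}}$, while the Duhamel identity $v(t)-\tilde v(t)=\int_0^t e^{(t-s)(\Delta-1)}(u-\tilde u)(s)\,ds$, combined with Lemma \ref{lmpq}(ii) at the pair $(\theta_0,q_2)$, the $T_0$-bound on $u-\tilde u$ and Lemma \ref{lmint}, gives
\[
\|\nabla(v(t)-\tilde v(t))\|_{L^{q_2}}\le C\varepsilon e^{-\mu' t}(1+t^{-\frac12+\frac{d}{2q_2}}),
\]
so altogether $\|\nabla v(t)\|_{L^{q_2}}\le C_1\varepsilon e^{-\mu' t}(1+t^{-\frac12+\frac{d}{2q_2}})$ on $[0,T_0)$.

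Writing the variation-of-constants formula for $e^{t\mathcal{A}}$ applied to \eqref{chemo2},
\[
\begin{pmatrix}u(t)-\tilde u(t)\\ v(t)-\tilde v(t)\end{pmatrix}=-\int_0^t e^{(t-s)\mathcal{A}}\begin{pmatrix}\nabla\cdot(u\nabla v)(s)\\ 0\end{pmatrix}\,ds,
\]
Lemma \ref{Cor} applied with $w=u\nabla v$ controls $\|u(t)-\tilde u(t)\|_{L^\theta}$ by a time convolution of $e^{-\mu_1(t-s)}(1+(t-s)^{-\frac12-\frac d2(\frac1q-\frac1\theta)})\|(u\nabla v)(s)\|_{L^q}$. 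H\"older's inequality provides $\|u\nabla v\|_{L^{q_0}}\le\|u\|_{L^d}\|\nabla v\|_{L^{q_2}}$ (taking $q=q_0$ for $\theta\in[q_0,\theta_0]$) and $\|u\nabla v\|_{L^{\theta_0}}\le\|u\|_{L^{q_1}}\|\nabla v\|_{L^{q_2}}$ (taking $q=\theta_0$ for $\theta\in(\theta_0,\infty]$). Plugging in the above a-priori bounds and invoking Lemma \ref{lmint} (using $\mu'<\mu_1$), each resulting convolution is dominated by $C_3\varepsilon^2 e^{-\mu' t}(1+t^{-1+\frac{d}{2\theta}})$. Choosing $\varepsilon_0<1/(2C_3)$ contradicts the maximality of $T_0$ unless $T_0=T_{\mathrm{max}}$; letting $\theta\to\infty$ in the resulting bound then precludes $L^\infty$ blow-up, so Lemma \ref{loex1} forces $T_{\mathrm{max}}=\infty$.

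The principal obstacle, absent in the $\gamma=0$ case, is that the elliptic regularity shortcut $\|\nabla v\|_{L^{q_2}}\lesssim\|u\|_{L^{q_0}}$ used in Proposition \ref{prop0} is no longer available; $\nabla v$ must instead be assembled from Lemma \ref{keylem1} and a separate parabolic Duhamel for the $v$-equation, doubling the bookkeeping. Careful tracking of the exponents $q_0,\theta_0,q_1,q_2$ is needed both to keep every singular kernel integrable so that Lemma \ref{lmint} applies with the correct residual decay $(-1+d/(2\theta))$, and to stay away from the $q=1$ endpoint of Lemma \ref{Cor} in dimension two.
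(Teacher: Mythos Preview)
Your proposal is correct and follows essentially the same route as the paper: define the maximal time $T_1$ on which the closeness bound holds, control $\nabla v$ via the decomposition $\nabla v=\nabla\tilde v+\nabla(v-\tilde v)$ using Lemma~\ref{keylem1} for the first piece and a parabolic Duhamel estimate (Lemma~\ref{lmpq}(ii)) on $u-\tilde u$ for the second, then close with Lemma~\ref{Cor} and Lemma~\ref{lmint}. The only cosmetic differences are that the paper takes $q_2$ strictly inside $(d,\tfrac{dq_0}{d-q_0})$ and uses the H\"older splitting $\tfrac{1}{q_0}=\tfrac{1}{q_1}+\tfrac{1}{q_2}$, whereas you copy the $\gamma=0$ exponent choice $\tfrac{1}{q_0}=\tfrac{1}{d}+\tfrac{1}{q_2}$; both work and yield the same final exponent $-1+\tfrac{d}{2\theta}$. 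One small remark: your second case $\theta\in(\theta_0,\infty]$ and the phrase ``letting $\theta\to\infty$'' are unnecessary---Lemma~\ref{Cor} is stated only for $p<\infty$, and the blow-up criterion of Lemma~\ref{loex1} is already ruled out by the $L^{\theta_0}$ bound since $\theta_0>\tfrac d2$, which is exactly how the paper concludes $T_{\mathrm{max}}=\infty$.
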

\begin{proof}According to Lemma \ref{loex1}, problem \eqref{chemo1} with nonnegative initial data $\rho_0=u_0+\M$ and $c_0=v_0+\M$ has a classical solution on $[0,T_{\mathrm{max}})$ and if $T_{\mathrm{max}}<\infty,$ we have $\limsup\limits_{t\nearrow T_{\mathrm{max}}}\|\rho(\cdot,t)\|_{L^\infty}=\infty.$ Therefore, for problem \eqref{chemo2}, we obtain a classical solution $(u,v)=(\rho-\M,c-\M)$ on $[0,T_{\mathrm{max}})$ and if $T_{\mathrm{max}}<\infty,$ we have $\limsup\limits_{t\nearrow T_{\mathrm{max}}}\|u(\cdot,t)\|_{L^\infty}=\infty.$
	Denoting $(\tilde{u},\tilde{v})$ the solution of linearized system \eqref{chemo3b}, we derive by variation-of-constants formula that for $0<t<T_{\mathrm{max}}$,
\begin{align}\label{vform}
	v(t)=&e^{t(\Delta-1)}v_0+\int_0^t e^{(t-s)(\Delta-1)}u(s)ds\nonumber\\
	= &e^{t(\Delta-1)}v_0+\int_0^t e^{(t-s)(\Delta-1)}\tilde{u}(s)ds+\int_0^t e^{(t-s)(\Delta-1)}\big(u(s)-\tilde{u}(s)\big)ds\nonumber\\
	=&\tilde{v}(t)+\int_0^t e^{(t-s)(\Delta-1)}\big(u(s)-\tilde{u}(s)\big)ds.
\end{align}

On the other hand, exploiting the semigroup $e^{t\mathcal{A}}$ and variation-of-constants formula again, we infer that
\begin{align}\label{uvexp}
	\left(\begin{matrix}u(t)\\
v(t)\end{matrix}\right)&=e^{t\mathcal{A}}	\left(\begin{matrix}u_0\\
v_0\end{matrix}\right)+\int_0^te^{(t-s)\mathcal{A}}	\left(\begin{matrix}-\nabla\cdot(u(s)\nabla v(s))\\
0\end{matrix}\right)ds
\end{align}
from which we represent $u$ according to
\begin{align}\label{uform}
	u(t)=\u(t)-\int_0^t\Phi_1^{t-s}\left(\nabla\cdot(u(s)\nabla v(s)),0\right)ds.
\end{align}

Now, like in \cite{Cao} (see also \cite{Win10}), fix some $q_0\in(\frac d2,d)$ and $\theta_0\in(d,\frac{dq_0}{d-q_0})$, one can choose $q_1,q_2>0$ such that $q_1\in(q_0,\theta_0)$, $q_2\in(d,\frac{dq_0}{d-q_0})$ and $\frac1{q_0}=\frac{1}{q_1}+\frac{1}{q_2}.$
Define
\begin{equation}\nonumber
	T_1\triangleq\sup\bigg{\{}T>0:\|u(t)-\tilde{u}(t)\|_{L^\theta}\leq\varepsilon e^{-\mu' t}(1+t^{-1+\frac{d}{2\theta}}),\;\;\text{for all}\;t\in[0,T)\;\text{and all} \;\theta\in[q_0,\theta_0].\bigg{\}}
\end{equation}
Then $T_1$ is well-defined and positive with $T_1\leq T_{\mathrm{max}}$, since on the one hand, $\|u(t)\|_{L^\infty}$ and $\|\u(t)\|_{L^{\theta_0}}$  are both  bounded near $t=0$ due to Lemma \ref{loex1} and Remark \ref{remark}. Hence $\|\u(t)\|_{L^\theta}\leq |\Omega|^{1/\theta-1/{\theta_0}}\|\u(t)\|_{L^{\theta_0}}\leq \max\{|\Omega|^{1/{q_0}},1\}\|\u(t)\|_{L^{\theta_0}}$ are uniformly bounded with respect to $\theta\in[q_0,\theta_0]$. On the other hand as $t\rightarrow0^+,$ $t^{-1+\frac{d}{2\theta}}\geq t^{-1+\frac{d}{2q_0}}\rightarrow+\infty$ uniformly with respect to $\theta\in[q_0,\theta_0]$. Now we claim that when $\varepsilon_0$ is sufficiently small, we have $T_1=\infty.$

First, we apply $\nabla$ to both sides of \eqref{vform} to deduce that
\begin{align}
	\|\nabla v(t)-\nabla \tilde{v}(t)\|_{L^p(\Omega)}\leq& \int_0^t\|\nabla e^{(t-s)(\Delta-1)}(u(s)-\tilde{u}(s))\|_{L^p(\Omega)}ds\nonumber\\
	\leq&\int_0^t k_2(1+(t-s)^{-\frac12-\frac{d}{2}(\frac1{q_0}-\frac1p)})e^{-(\lambda_1+1)(t-s)}\|u(s)-\tilde{u}(s)\|_{L^{q_0}(\Omega)}ds\nonumber\\
	\leq&\int_0^t k_2(1+(t-s)^{-\frac12-\frac{d}{2}(\frac1{q_0}-\frac1p)})e^{-(\lambda_1+1)(t-s)}\varepsilon e^{-\mu't}(1+s^{-1+\frac{d}{2q_0}})ds\nonumber\\
	\leq& Ck_2\varepsilon(1+t^{-\frac{1}{2}+\frac{d}{2p}})e^{-\mu't}\nonumber\\
	\leq& C_4\varepsilon(1+t^{-\frac{1}{2}+\frac{d}{2p}})e^{-\mu't}
\end{align}for all $t\in(0,T_1)$ and all $p\in[q_0,\frac{dq_0}{d-q_0})$ where $C_4>0$ depends on $q_0$ and $\Omega$ only.
As a result, we infer by Lemma \ref{keylem1} that for $p\in[d,\frac{dq_0}{d-q_0})$
\begin{align}
	\|\nabla v(t)\|_{L^p(\Omega)}\leq&\|\nabla \v(t)\|_{L^p(\Omega)}+C_4\varepsilon(1+t^{-\frac{1}{2}+\frac{d}{2p}})e^{-\mu't}\nonumber\\
	\leq& c_7p\varepsilon e^{-\mu_1t}(1+t^{-\frac12+\frac{d}{2p}})+C_4\varepsilon(1+t^{-\frac{1}{2}+\frac{d}{2p}})e^{-\mu't}\nonumber\\
	\leq &C\varepsilon e^{-\mu't}(1+t^{-\frac12+\frac{d}{2p}})
\end{align} where $C$ depends on $d$, $\M$, $q_0$ and $\Omega$. For $p\in[q_0,d)$, applying H\"older's inequality and Lemma \ref{keylem1}, there holds
\begin{align}
	\|\nabla v(t)\|_{L^p(\Omega)}\leq& |\Omega|^{\frac{1}{p}-\frac1d}\|\nabla \v(t)\|_{L^d(\Omega)}+C_4\varepsilon(1+t^{-\frac{1}{2}+\frac{d}{2p}})e^{-\mu't}\nonumber\\
	\leq&c_7d|\Omega|^{\frac1p-\frac1d}e^{-\mu_1t}\varepsilon+C_4\varepsilon(1+t^{-\frac{1}{2}+\frac{d}{2p}})e^{-\mu't}\nonumber\\
	\leq&C\varepsilon(1+t^{-\frac{1}{2}+\frac{d}{2p}})e^{-\mu't}
\end{align}with $C$ depends on $d$, $q_0$, $\M$ and $\Omega$ only. Thus, we conclude from above that for all $p\in[q_0,\frac{dq_0}{d-q_0})$
\begin{equation}\label{vp}
		\|\nabla v(t)\|_{L^p(\Omega)}\leq C_5\varepsilon(1+t^{-\frac{1}{2}+\frac{d}{2p}})e^{-\mu't}
\end{equation}with $C_5$ depends on $d$, $q_0$, $\M$ and $\Omega.$

On the other hand, in views of the definition of $T_1$ and Lemma \ref{keylem1}, we also have
\begin{align}\label{up}
	\|u(t)\|_{L^p(\Omega)}\leq& \|\u(t)\|_{L^p(\Omega)}+\varepsilon e^{-\mu't}(1+t^{-1+\frac{d}{2p}})\nonumber\\
	\leq& c_6e^{-\mu_1t}(1+t^{-1+\frac{d}{2p}})\varepsilon+\varepsilon e^{-\mu't}(1+t^{-1+\frac{d}{2p}})\nonumber\\
	\leq & C_6e^{-\mu't}(1+t^{-1+\frac{d}{2p}})\varepsilon
\end{align} holds for all $p\in[q_0,\theta_0]$ and $0<t<T_1$, where $C_6$ depends on $d$ and $\Omega$ only.

Now due to \eqref{uform} and Lemma \ref{Cor}, we can finally estimate 
\begin{align}
	&\|u(t)-\tilde{u}(t)\|_{L^\theta(\Omega)}\non\\
	\leq& \int_0^t\|\Phi_1^{t-s}\left(\nabla\cdot(u(s)\nabla v(s)),0\right)\|_{L^\theta(\Omega)}ds\nonumber\\
	\leq&c_8\int_0^te^{-\mu_1(t-s)}(1+(t-s)^{-\frac{1}{2}-\frac d2(\frac1{q_0}-\frac1\theta)})\|u(s)\nabla v(s)\|_{L^{q_0}(\Omega)}ds\nonumber\\
	\leq &c_8\int_0^te^{-\mu_1(t-s)}(1+(t-s)^{-\frac{1}{2}-\frac d2(\frac1{q_0}-\frac1\theta)})\|u(s)\|_{L^{q_1}(\Omega)}\|\nabla v(s)\|_{L^{q_2}(\Omega)}ds\nonumber\\
	\leq &c_8C_5C_6\varepsilon^2\int_0^te^{-\mu_1(t-s)}(1+(t-s)^{-\frac{1}{2}-\frac d2(\frac1{q_0}-\frac1\theta)})e^{-2\mu's}(1+s^{-1+\frac d{2q_1}})(1+s^{-\frac12+\frac d{2q_2}})ds\nonumber\\
	\leq& c_8C_5C_6\varepsilon^2\int_0^te^{-\mu_1(t-s)}(1+(t-s)^{-\frac{1}{2}-\frac d2(\frac1{q_0}-\frac1\theta)})e^{-\mu's}(1+s^{-\frac32+\frac d{2q_0}})ds\nonumber\\
	\leq& C_7\varepsilon^2(1+t^{-1+\frac d{2\theta}})e^{-\mu't}
\end{align}for all $0<t<T_1$,
with $C_7$ depends on $d$, $\M$, $q_0$ and $\Omega$, but is independent of $T_1$ or $t$. Now, choosing $\varepsilon_0<\frac{1}{2C_7}$, we conclude that $T_1=\infty$ which implies that $T_{\mathrm{max}}=\infty$ as well, i.e.,  $(u,v)$ is global and bounded.\end{proof}

By the same argument as before, keeping in mind that $(u,v)$ is just a reduction of $\M$ from $(\rho,c),$ we obtain the existence of a unique global solution to problem \eqref{chemo1} with $\gamma=1$ under the assumptions of Theorem \ref{TH1}. It remains to show the exponentially decay in $L^\infty-$norm. 
 Invoking \eqref{vp}, the fact $\|u(t)\|_{L^\infty}\leq C$ and Lemma \ref{lmint}, we deduce that
\begin{align}
	&\|u(t)\|_{L^\infty(\Omega)}\non\\
	\leq& \|e^{t\Delta}u_0\|_{L^\infty(\Omega)}+\M\int_0^t\|e^{\Delta(t-s)}\Delta v(s)\|_{L^\infty(\Omega)}ds+\int_0^t\|e^{\Delta(t-s)}\nabla\cdot(u(s)\nabla v(s))\|_{L^\infty(\Omega)}ds\nonumber\\
	\leq& k_1e^{-\lambda_1t}\|u_0\|_{L^\infty(\Omega)}+C\int_0^t(1+(t-s)^{-\frac12-\frac{d}{2p}})e^{-\lambda_1(t-s)}\|\nabla v(s)\|_{L^p(\Omega)}ds\nonumber\\
	&\quad+C\int_0^t(1+(t-s)^{-\frac12-\frac{d}{2p}})e^{-\lambda_1(t-s)}\|u(s)\nabla v(s)\|_{L^p(\Omega)}ds\nonumber\\
	\leq&Ce^{-\lambda_1t}+C\int_0^t(1+(t-s)^{-\frac12-\frac{d}{2p}})e^{-\lambda_1(t-s)}\|\nabla v(s)\|_{L^p(\Omega)}ds\nonumber\\
	\leq&Ce^{-\lambda_1t}+C\int_0^t(1+(t-s)^{-\frac12-\frac{d}{2p}})e^{-\lambda_1(t-s)}(1+s^{-\frac12+\frac{d}{2p}})e^{-\mu's}ds\nonumber\\
	\leq& Ce^{-\lambda_1t}+Ce^{-\mu't}\nonumber\\
	\leq& Ce^{-\mu't}
\end{align}where $p\in(d,\frac{dq_0}{d-q_0})$ such that $\frac12+\frac{d}{2p}\in(0,1)$ and $\frac12-\frac{d}{2p}\in(0,1).$ As a consequence,
\begin{align}
	&\|\nabla v(t)\|_{L^\infty(\Omega)}\non\\
	\leq&\|\nabla e^{t(\Delta-1)}v_0\|_{L^\infty(\Omega)}+\int_0^t\|\nabla e^{(\Delta-1)(t-s)}u(s)\|_{L^\infty(\Omega)}ds\nonumber\\
	\leq & Ce^{-(\lambda_1+1)t}(1+t^{-\frac12})\|v_0\|_{L^\infty(\Omega)}+C\int_0^t(1+(t-s)^{-\frac12-\frac{d}{2q}})e^{-(\lambda_1+1)(t-s)}\|u(s)\|_{L^q(\Omega)}ds\nonumber\\
	\leq&Ce^{-(\lambda_1+1)t}(1+t^{-\frac12})+C\int_0^t(1+(t-s)^{-\frac12-\frac{d}{2q}})e^{-(\lambda_1+1)(t-s)}e^{-\mu's}(1+s^{-1+\frac{d}{2q}})ds\nonumber\\
	\leq& Ce^{-(\lambda_1+1)t}(1+t^{-\frac12})+Ce^{-\mu't}(1+t^{-\frac12})
\end{align}with some $q>d$, which indicates that when $t\geq1,$
\begin{equation}
		\|\nabla v(t)\|_{L^\infty(\Omega)}\leq Ce^{-\mu't}.
\end{equation}This completes the proof of Theorem \ref{TH1}.

\end{document}